\documentclass{article}

\usepackage{amsmath}
\usepackage{amsfonts}
\usepackage{stmaryrd}
\usepackage{graphicx}
\usepackage{amsthm}
\usepackage[colorlinks=true, allcolors=blue]{hyperref}

\title{On \KURKA{}'s dichotomy for cellular automata on groups}
\author{Jade Angela Hope Audouard\\ \textit{I2M, CNRS, Aix-Marseille University} \and Guillaume Theyssier\\ \textit{I2M, CNRS, Aix-Marseille University}}

\begin{document}

\newcommand\p{ p }
\newcommand\porter{$\Delta$}
\newcommand\Z{\mathbb{Z}}
\newcommand\N{\mathbb{N}}
\newcommand\KURKA{K\r{u}rka}

\maketitle

\begin{abstract}
  This paper is about topological dynamics of cellular automata on finitely generated groups.
  We tackle the problem of determining for which group sensitivity to initial conditions is equivalent to the absence of equicontinuity points (so-called \KURKA{}'s dichotomy).
  We show that the dichotomy holds on any virtually-$\Z$ group but not on free groups with 2 or more generators.
  We also show that if it holds on some group, it must hold on any of its subgroups.
\end{abstract}

\theoremstyle{plain}
\newtheorem{theorem}{Theorem}
\newtheorem{corollary}{Corollary}
\newtheorem{proposition}{Proposition}
\newtheorem{lemma}{Lemma}

\theoremstyle{definition}
\newtheorem{figuree}{Figure}
\newtheorem{definition}{Definition}
\newtheorem{bliblop}{ceciestuntest}

\theoremstyle{remark}
\newtheorem*{demonstration}{Démonstration}
\newtheorem{remarque}{Remarque}[subsection]
\newtheorem{legende}{Légende}[figuree]

\section{Introduction}
Cellular automata have been studied as topological dynamical systems since the seminal work of Hedlund {\it et al.\/} \cite{hedlund}.
In particular, they represent a class of systems where chaotic behaviors are common and very simple to define, which justifies the study of classical properties like sensitivity to initial conditions and presence of equicontinuity points.
Recall that sensitivity to initial conditions is a property of instability of the system (every open set grows to at least a fixed diameter), while equicontinuity points are those whose orbit is stable (perturbed orbits stay arbitrarily close to the point's orbit provided small enough perturbation on initial configuration). These two properties are mutually exclusive and in fact there is a dichotomy between the two for transitive dynamical systems on a compact metric space \cite{Akin_1996}.

This topological dynamics point of view is adopted in \cite{kurkabook} to study cellular automata. In particular, it is proven that for one-dimensional cellular automata, without the transitivity assumption, there is again a dichotomy between sensitivity and existence of equicontinuity points.
Later, it was shown that it is no longer the case in dimension two or more \cite{Sablik_2010}.

A variant of these notions using averaging along the orbits (mean- or diam-mean- sensitivity/equicontinuity) still presents a dichotomy for transitive dynamical systems \cite{LI_2014}, but no more for one-dimensional cellular automata \cite{DE_LOS_SANTOS_BA_OS_2020}.

Besides, another variant introduced in \cite{Gilman:1987:CLA} taking a measure theoretic point of view also presents a dichotomy in the one-dimensional case.
Interestingly, in a recent work \cite{barbieri2024}, this measure-theoretic dichotomy is studied in the broader context of cellular automata on finitely generated groups (a settings which is classical for cellular automata \cite{celautgrp} but not often considered with the kind of asymptotic dynamical properties discussed here).
The main result of \cite{barbieri2024} builds upon a result from percolation theory to show that the measure-theoretic dichotomy holds if and only if the group is virtually $\Z$.

In the present paper we tackle the question of determining on which groups the topological dichotomy from \cite{kurkabook} does hold.
Our main results are the following:
\begin{itemize}
\item the topological dichotomy holds on any virtually free group (Theorem~\ref{theo:virtuallyfree});
\item if the topological dichotomy holds on some group, then it holds on any subgroup (Corollary~\ref{coro:subgroup});
\item the topological dichotomy does not hold on the free group over two or more generators (Corollary~\ref{coro:freegroup}).
\end{itemize}

\section{Formal definitions and basic facts}

We'll first define cellular automata over Cayley graphs of finitely generated groups following \cite{celautgrp,Meier_2008}, and then introduce the basic notions from topological dynamics to address the main problem tackled in this paper. 

\newcommand\cayley{\mathtt{Cayley}}
Let $G$ be a group generated by some finite set ${E\subseteq G}$ that we assume to be closed under taking inverses.
We denote by $L_g$ the left multiplication by $g$, \textit{i.e.} the map ${h\mapsto gh}$. The Cayley graph $\cayley(G,E)$ is the graph with vertex set $G$ and having for each $e\in E$ the following set of edges labeled by $e$: ${\{(g,g\cdot e): g\in G\}}$. To this graph $\cayley(G,E)$ is associated the distance $d_E$ on $G$ (that is sometimes called the word distance). 

Now fix some finite set $A$ called \emph{alphabet}. We call $A^G$ the set of \emph{configurations}. The group $G$ naturally acts on the left on configurations by ${gx = x \circ L_{g^{-1}}}$ for all $x\in A^G$ and $g\in G$. This action is called the \emph{$G$-shift}. The set of configurations can be endowed with a metric $d^E$, called Cantor metric, defined as follows:
\[\forall x,y \in A^G, d^E(x,y) = 2^{-k  } \textrm{ where } k= min\{  d_E(1_G,g) | g\in G, x(g) \ne y(g)  \}\]
This Cantor metric is always compact and topologically equivalent to the prodiscrete topology (\textit{i.e.} the product over $G$ of the discrete topology on $A$), whatever the choice of generators $E$ (see \cite{celautgrp} for details). More precisely, and as a first step to prove that the properties we study later do not depend on the choice of $E$, let us state the following basic lemma about previously defined metrics.

\begin{lemma}\label{lem:eqdist}
  Let $E_1,E_2$ be two finite generating sets of the same group $G$. Denote by $d^1$ and $d^2 $ the associated Cantor distances over $A^G$. Furthermore, let $B^i(x,r)$ denote the ball of center $x\in A^G$ and radius $r$ for distance $d^i$ (for $i=1,2$). It holds:
    \[\forall \epsilon_1>0, \exists \epsilon_2>0,\forall x \in A^G,  B^2(x,\epsilon_2)\subset B^1(x,\epsilon_1) \]
\end{lemma}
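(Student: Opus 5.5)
The idea is to compare the two word metrics $d_{E_1}$ and $d_{E_2}$ on $G$ by a Lipschitz bound, and then turn that into a comparison of Cantor balls. Let
\[
C=\max\{d_{E_2}(1_G,e) : e\in E_1\}.
\]
This is finite because $E_1$ is finite and $E_2$ generates $G$. It is positive if $G$ is nontrivial; the trivial group is immediate.

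First I show that $d_{E_2}(1_G,g)\le C\, d_{E_1}(1_G,g)$ for every $g\in G$. Write $g=e_1\cdots e_n$ with $e_i\in E_1$ and $n=d_{E_1}(1_G,g)$. Each $e_i$ is a word of length at most $C$ over $E_2$. Concatenating these words gives a word over $E_2$ of length at most $Cn$ representing $g$. Equivalently, by left-invariance of $d_{E_2}$ and the triangle inequality along the path $1_G, e_1, e_1e_2,\dots$, each step costs at most $C$. Consequently, if $d_{E_2}(1_G,g)\le m$ forces nothing useful directly, we use the contrapositive: $d_{E_1}(1_G,g)<k$ implies $d_{E_2}(1_G,g)<Ck$.

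Next, fix $\epsilon_1>0$ and pick $k\in\N$ with $2^{-k}<\epsilon_1$. Set $\epsilon_2=2^{-Ck}$. Take $y\in B^2(x,\epsilon_2)$, so $d^2(x,y)<2^{-Ck}$. By definition of the Cantor metric, $x$ and $y$ agree on every $g$ with $d_{E_2}(1_G,g)\le Ck$. By the first step, every $g$ with $d_{E_1}(1_G,g)\le k$ satisfies $d_{E_2}(1_G,g)\le Ck$, so $x$ and $y$ agree on the $d_{E_1}$-ball of radius $k$. Hence any disagreement occurs at $d_{E_1}$-distance at least $k+1$, giving $d^1(x,y)\le 2^{-(k+1)}<\epsilon_1$. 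The case $x=y$ (distance $0$) is trivial.

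The only step needing care is the off-by-one bookkeeping between strict and non-strict inequalities in the ball and metric definitions. Choosing $k$ with a margin and taking $\epsilon_2$ slightly smaller, e.g. $2^{-C(k+1)}$, removes any doubt. Since $\epsilon_2$ depends only on $\epsilon_1$ and $C$, and not on $x$, the inclusion is uniform in $x$, as the lemma requires.
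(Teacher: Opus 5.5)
Your proof is correct and follows the same route as the paper: a Lipschitz comparison of the word metrics gives $B_{E_1}(1_G,k)\subseteq B_{E_2}(1_G,Ck)$, and choosing $\epsilon_2=2^{-Ck}$ turns this into the inclusion of Cantor balls, uniformly in $x$. The only difference is that you prove the one-sided bound $d_{E_2}\le C\,d_{E_1}$ directly, with the explicit constant $C=\max_{e\in E_1}d_{E_2}(1_G,e)$, instead of citing the bi-Lipschitz equivalence from Meier; this one-sided bound is all the lemma needs, and your ``contrapositive'' remark is unnecessary but harmless.
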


\begin{proof}
  Let us denote by $d_1$ and $d_2$ the word metrics in $G$ associated to $E_1$ and $E_2$ respectively.
  It is known that these distances are equivalent in the following sense (Corollary 11.3 of \cite{Meier_2008}):
  $$ \exists \lambda >1, \forall g,h \in G, \frac{1}{\lambda} \cdot d_1(g,h) \leq d_2(g,h) \leq \lambda \cdot d_1(g,h) $$
  We can suppose without loss of generality that $\lambda$ is an integer.
  For any $\epsilon_1>0$ let ${k = \lceil - log_2(\epsilon_1) \rceil}$ so that we have for $i=1,2$:
  \[\forall x,y \in A^G, (x_{|B_i(1_G,k)}=y_{|B_i(1_G,k)} ) \Rightarrow d^i(x,y) \leq \epsilon_1 .\]
  Now choose $k_2 = k_1 *\lambda $. We then have $ \forall g,h \in G, d_1(g,h)\leq k_1 \Rightarrow d_2(g,h) \leq \lambda \cdot d_1(g,h) \leq \lambda \cdot k_1 = k_2 $, said differently
  $$ \forall g \in G , B_1(g,k_1) \subset B_2(g,k_2) $$
  where notations $B_i$ denote balls in the word metric $d_i$.
  
  Finally, by choosing $\epsilon_2 = 2^{-k_2} $, we can conclude as follows: $\forall x,y \in A^G$, if $d^2(x,y) \leq \epsilon_2$ then $x_{|B_2(1_G,k_2)}=y_{|B_2(1_G,k_2)}$, therefore $x_{|B_1(1_G,k_1)}=y_{|B_1(1_G,k_1)}$ and finally $(d^1(x,y) \leq \epsilon_1$.
\end{proof}

A \emph{cellular automaton} on group $G$ with alphabet $A$ is a map $\Phi : A^G \rightarrow A^G$ that can be defined by a finite set $S \subset G$ called \emph{neighborhood} and a \emph{local map} $ \mu : A^S \rightarrow A$ as follows:
\[\forall g\in G, \forall x \in A^G, \Phi(x)(g) = \mu((g^{-1}x)_{|S}).\]

Cellular automata are exactly the continuous maps for the prodiscrete topology (or any Cantor metric) that commute with $G$-shifts (see \cite{celautgrp}).
They can therefore be seen as topological dynamical systems, and the point of this paper is to study them through classical dynmical properties, namely \emph{sensitivity to initial conditions} and \emph{equicontinuity point}.

First, equicontinuity points are configurations around which the orbits are stable, \emph{i.e.} stay close to each other forever. Precisely, $x\in A^G$ is an equicontinuity point for a cellular automaton $\Phi$ if
\[\forall \epsilon>0, \exists \delta>0, \forall t \in \N, \Phi^t(B^E(x,\delta)) \subset B^E(\Phi^t(x),\epsilon)\]
where $B^E$ denotes the balls in the Cantor metric associated to generating set $E$. 

$\Phi$ is sensitive to initial conditions if there is some fixed precision that can not be guaranteed on any orbit whatever the precision imposed on initial conditions, formally:
 \[\exists\epsilon>0,\forall x \in X, \forall \delta > 0, \exists t \in \N, \exists y \in B(x,\delta) , \Phi^t(y)\notin B(\Phi^t(x),\epsilon).\]

Actually, using classical results of finitely generated groups (Corollary 11.3 of \cite{Meier_2008}), it can be shown that sensitivity to initial conditions and equicontinuity points do not depend on the choice of generating set $E$.

\begin{proposition}
    Let $d^1$ and $d^2$ be the Cantor metrics associated to two generating sets of the same group $G$, and consider some cellular automaton $\Phi$ over $A^G$. Then $x$ is an equicontinuity point for $\Phi$ with metric $d^1$ if and only if it is with metric $d^2$. Similarly, $\Phi$ is sensitive to initial conditions for metric $d^1$ if and only if it is sensitive for metric $d^2$.
\end{proposition}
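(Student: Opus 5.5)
The plan is to derive both equivalences directly from Lemma~\ref{lem:eqdist}, applied in both directions (by symmetry, with the roles of $E_1$ and $E_2$ exchanged). Write $B^i$ for the balls of $d^i$. The lemma says exactly that the identity map $(A^G,d^2)\to(A^G,d^1)$ is uniformly continuous, with a modulus $\epsilon_1\mapsto\epsilon_2$ that does not depend on the center $x$. Symmetrically, the identity $(A^G,d^1)\to(A^G,d^2)$ is uniformly continuous. Both notions in the statement are preserved under such uniform equivalences, so the proof is a matter of composing quantifiers.

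\emph{Equicontinuity points.} Suppose $x$ is an equicontinuity point for $d^1$, and let $\epsilon>0$ be given for $d^2$. The argument has three steps.
\begin{enumerate}
\item Apply Lemma~\ref{lem:eqdist} with the roles of the sets swapped. This yields $\epsilon'>0$ such that $B^1(z,\epsilon')\subset B^2(z,\epsilon)$ for every $z\in A^G$.
\item Use the equicontinuity of $x$ for $d^1$ at precision $\epsilon'$. This gives $\delta'>0$ such that $\Phi^t(B^1(x,\delta'))\subset B^1(\Phi^t(x),\epsilon')$ for all $t$.
\item Apply the lemma once more to get $\delta>0$ with $B^2(x,\delta)\subset B^1(x,\delta')$.
\end{enumerate}
Chaining these inclusions gives, for every $t$,
\[
\Phi^t(B^2(x,\delta))\subset \Phi^t(B^1(x,\delta')) \subset B^1(\Phi^t(x),\epsilon')\subset B^2(\Phi^t(x),\epsilon),
\]
so $x$ is an equicontinuity point for $d^2$. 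The converse follows by symmetry.

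\emph{Sensitivity.} Suppose $\Phi$ is sensitive for $d^1$ with constant $\epsilon_1$. By Lemma~\ref{lem:eqdist}, take $\epsilon_2>0$ such that $B^2(z,\epsilon_2)\subset B^1(z,\epsilon_1)$ for all $z$; we claim $\epsilon_2$ is a sensitivity constant for $d^2$. Given $x$ and $\delta>0$, use the lemma in the other direction to choose $\delta'>0$ with $B^1(x,\delta')\subset B^2(x,\delta)$. Sensitivity for $d^1$ gives some $t$ and some $y\in B^1(x,\delta')\subset B^2(x,\delta)$ with $\Phi^t(y)\notin B^1(\Phi^t(x),\epsilon_1)$. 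Since $B^2(\Phi^t(x),\epsilon_2)\subset B^1(\Phi^t(x),\epsilon_1)$, this forces $\Phi^t(y)\notin B^2(\Phi^t(x),\epsilon_2)$. The converse is again symmetric.

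\emph{Main point of care.} There is no real obstacle here, only bookkeeping. The key fact is that the inclusion in Lemma~\ref{lem:eqdist} is uniform in the center, which is what allows it to be applied at the moving points $\Phi^t(x)$ and not only at $x$. One also has to keep track of which direction of the lemma is invoked at each step. Finally, open versus closed balls cause no difficulty, because the Cantor metrics take values in $\{2^{-k}\}$ and the radii can be shrunk slightly if needed.
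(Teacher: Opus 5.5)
Your proof is correct and follows the paper's argument: you apply Lemma~\ref{lem:eqdist} in both directions, using its uniformity in the center so it can be applied at $\Phi^t(x)$, and chain the resulting inclusions around the equicontinuity and sensitivity quantifiers. The only cosmetic difference is that you prove the transfer of sensitivity directly, whereas the paper just remarks that non-sensitivity is the equicontinuity condition with the quantifiers reordered.
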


\begin{proof}
  By symmetry it is enough to show that if $x$ is an equicontinuity with metric $d^2$ then it is also with metric $d^1$.
  Consider any $\epsilon_1>0$. By Lemma~\ref{lem:eqdist} there is $\epsilon_2>0$ such that, for all $y$, $B^2(y,\epsilon_2)\subset B^1(y,\epsilon_1)$.
  $x$ being an equicontinuity point for $d^2$ there exists $\delta_2$ such that $\forall t \in \N, \Phi^t(B^2(x,\delta_2)  ) \subset B^2(\Phi^t(x),\epsilon_2)$.
  Using Lemma~\ref{lem:eqdist} again for $\delta_2$, we obtain $\delta_1>0$ such that, for all $y$, $B^1(y,\delta_1) \subset B^2(y,\delta_2)$.
  Putting everything together we have:
  \[\forall t \in \mathbb{N},
    \Phi^t(B^1(x,\delta_1)) \subset \Phi^t(B^2(x,\delta_2)) \subset B^2(\Phi^t(x),\epsilon_2) \subset B^1(\Phi^t(x),\epsilon_1).\]

  The proof for sensitivity is similar since non-sensitivity is the same formula as above up to the order of quantification.
\end{proof}

It is straightforward from the definition that existence of equicontinuity points implies non-sensitivity.
If the ambient group $G$ is such that the converse holds for any CA, we say that $G$ satisfies \KURKA{}'s dichotomy (in reference to the seminal work of \KURKA{} in the case $G=\Z$, see \cite{kurkabook}).

We now introduce the key concept of \emph{blocking words} in the case of an arbitrary $G$.

\begin{definition}[Blocking word]
  Let $\Phi:A^G\to A^G$ be a CA, $V\subseteq G$ finite and $u:L \rightarrow A$ a pattern of finite support.
  We say $u$ is $V$-blocking for $\Phi$ if
  $$\forall x,y \in A^{G}, 
  (x_{\rvert L} = y_{\rvert L} = u) \Rightarrow 
  (\forall t \in \mathbb{N},  \phi^t(x)_{\rvert V}=\phi^t(y)_{\rvert V} ) $$
\end{definition}

It can be checked that blocking-words can be extended for the support, restricted for the block region, and translated. 
Formally, if a pattern $u$ of support $L$ is $V$ blocking, then, for any ${V'\subseteq V}$, any word ${u'\in A^{L'}}$ with $L\subseteq L'$ and such that $u'_{|L} =  u$ is $V'$-blocking.
Moreover, for any $g\in G$, $g.u$ is a $gV$-blocking word of support $gL$.

\KURKA{}'s dichotomy do not hold in general, however on any $G$ non-sensitivity is equivalent to the existence of blocking words with respect to arbitrarily large balls in the group.

\begin{proposition}\label{prop:nonsensitiveblocking}
  $\Phi : A^G\to A^G$ is not sensitive to initial conditions if and only if, for any $k\in\N$, there exist some $B_G(1_G,k)$-blocking word for $\Phi$.
\end{proposition}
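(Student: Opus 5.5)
We prove the two directions separately, using the standard correspondence between Cantor balls and patterns on word-metric balls. For the Cantor metric $d^E$, $d^E(x,y)\le 2^{-k}$ holds exactly when $x$ and $y$ agree on the ball $B_G(1_G,k-1)$. Up to an index shift, Cantor balls are therefore cylinders determined by patterns on balls of $G$, and the statement becomes a translation exercise.

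\emph{Blocking words imply non-sensitivity.} Fix $\epsilon>0$ and choose $k$ with $2^{-k}<\epsilon$. By hypothesis there is a $B_G(1_G,k)$-blocking word $u$ of finite support $L$. Take an arbitrary $x$ and $\delta>0$. We modify $x$ to place $u$ far away, but we cannot move the protected region $B_G(1_G,k)$, since it must stay at the origin. So we work with the translate. Choose $g\in G$ such that $gL$ lies outside $B_G(1_G,r)$, where $2^{-r}<\delta$; this is possible whenever $G$ is infinite, and if $G$ is finite every CA is trivially non-sensitive. Then $g.u$ is $gB_G(1_G,k)$-blocking, but this protects the wrong region.

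The better choice is to use the extension property directly. Let $L'=L\cup B_G(1_G,r)$, and let $u'$ extend $u$ by copying $x$ on $B_G(1_G,r)\setminus L$; in doing this we must allow $u$ itself to overlap the ball. To handle the overlap, we do not insist that the witness point be $x$. Non-sensitivity only requires: for every $\epsilon$ there exist some $x$ and $\delta$ such that every $y\in B(x,\delta)$ stays $\epsilon$-close to $x$ at all times. So we may simply take any $x$ with $x_{|L}=u$ and take $\delta$ small enough that $B(x,\delta)$ forces agreement on $L$. Then every $y$ in this ball has $y_{|L}=u$, hence $\Phi^t(y)_{|B_G(1_G,k)}=\Phi^t(x)_{|B_G(1_G,k)}$ for all $t$, that is $d(\Phi^t x,\Phi^t y)\le 2^{-(k+1)}<\epsilon$. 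This is exactly the negation of sensitivity, so the finite-group case needs no separate treatment.

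\emph{Non-sensitivity implies blocking words.} Fix $k$ and take $\epsilon=2^{-(k+1)}$, so that $d(z,w)<\epsilon$ implies $z_{|B_G(1_G,k)}=w_{|B_G(1_G,k)}$. Non-sensitivity provides $x$ and $\delta>0$ such that for all $y\in B(x,\delta)$ and all $t$, $d(\Phi^t y,\Phi^t x)<\epsilon$. Choose $r$ such that agreement with $x$ on $L=B_G(1_G,r)$ implies $y\in B(x,\delta)$, and set $u=x_{|L}$. If $y,z$ both satisfy $y_{|L}=z_{|L}=u$, then both lie in $B(x,\delta)$. Hence $\Phi^t(y)$ and $\Phi^t(z)$ each agree with $\Phi^t(x)$ on $B_G(1_G,k)$, so they agree with each other there. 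Therefore $u$ is $B_G(1_G,k)$-blocking.

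The only real obstacle is the bookkeeping between strict and non-strict inequalities in the Cantor metric, that is, choosing the radius indices correctly. This is routine once the cylinder–ball correspondence above is fixed.
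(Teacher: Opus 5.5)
Your proof is correct and takes the same approach as the paper. Both identify Cantor balls with cylinders over word-metric balls and read non-sensitivity at precision $2^{-k}$ as the existence of a $B_G(1_G,k)$-blocking word; you split the paper's single equivalence into two directions and handle the index bookkeeping more carefully. The false starts in your first direction (translating $u$ by some $g$, extending to $L'=L\cup B_G(1_G,r)$, the finite-group aside) are unnecessary and should be deleted, since the final argument with an arbitrary $x$ satisfying $x_{|L}=u$ already suffices.
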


\begin{proof}
  For $k\in\N$ let ${\epsilon_k=2^{-k}>0}$ and denote ${V_k = B_G(1_G,k)}$. $\Phi$ is not sensitive if and only if, for all ${k\in\N}$, the following property $\mathcal{P}_k$ holds:
  \[\exists x_k\in A^G,\exists \delta_k > 0, \forall y \in B(x_k,\delta_k),\forall t, \Phi^t(y) \in B(\Phi^t(x_k),\epsilon_k).\]
  By taking ${p_k = \lceil -log_2(\delta_k) \rceil}$ and ${L_k = B_G(1_G,p_k)}$ and denoting ${u_k}$ the restriction of $x_k$ to domain $L_k$, we can check that the proposition
  \[\forall y \in B(x_k,\delta_k),\forall t, \Phi^t(y) \in B(\Phi^t(x_k),\epsilon_k)\]
  is equivalent to the proposition: $u_k$ is a $V_k$-blocking word of domain $L_k$.
  Property $\mathcal{P}_k$ is therefore equivalent to: there exists some $V_k$-blocking word.
  The proposition follows.
\end{proof}

\section{\KURKA{}'s dichotomy on virtually $\Z$ groups}

Let $G$ be a finitely generated group which is virtually-$\Z$ which means that there is a sub-group $H$ of $G$ of finite index such that there is an isomorphism $\varphi$ from $H$ to $\Z$.
Since $H$ is of finite index in $G$ we know that there is a finite number of right cosets : $ \{ Hg \}_{g \in G} $.
We can choose a fundamental domain, \textit{i.e.} a finite subset $F$ of $G$ such that :
\begin{itemize}
\item $1_G \in F $
\item $G = \bigcup_{f\in F} Hf $
\item For  all two $f$ and $f'$ in $F$, we have $Hf = Hf'$ or $Hf \cap Hf' = \emptyset $
\end{itemize}

Any element $g\in G$ can be uniquely decomposed into a product of an element of $H$ and one of $F$:
${\forall g \in G, \exists! f,z \in F \times H , \space g = zf }$.
The importance of the choice to work with the right cosets will become clear later.

We now introduce a couple of definitions to work with $G$ as if it was $\Z$.
For any $g=zf$ in $G$ where $z\in H $ and $f\in F$, we define ${\p(g) = \varphi(z) \in \mathbb{Z}}$.
This value is well defined thanks to the unicity of the decomposition.
We can think of $\p:G\to \Z $ as a kind of projection, though it is not necessarily a group homomorphism.
In this section we choose ${F\cup\{\varphi^{-1}(1)\}}$ as set of generators for $G$.

\begin{definition}
 For all $k$ in $\Z$ we will call $k$-th \emph{vertebra} the set: 
 $$ V_{\{k\}} = \p^{-1}(\{ k \}) = \{ g \in  G \rvert \p(g) = k \} $$ 
 For $X\subseteq\Z$ we define ${V_X = \p^{-1}(X)}$ and say that $V_X$ is a  \emph{section} if $X$ is a finite interval of $\Z$.
 If $V=V_{[ a,b ]}$ is a section, the length of $V$ is ${l(V) = b - a + 1}$.
 We also define the positive (or right) arm of $V$ as $Br_+(V) = V_{] b, \infty [}$  and the negative (or left) arm of $V$ as $Br_-(V) = V_{] \infty , a [}$.
\end{definition}

\begin{lemma}
  Let $V=V_{[ a,b ]}$ be a section and $h$ an element of $H$, then $hV$ is a section and $ l(hV) = l(V) $. More precisely, $hV= V_{[ \varphi(h) + a , \varphi(h) + b ] }$.
\end{lemma}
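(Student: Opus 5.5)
The whole statement follows from one identity: for every $h\in H$ and $g\in G$,
\[\p(hg)=\varphi(h)+\p(g).\]
Once this holds, left multiplication by $h$ shifts every value of $\p$ by the constant $\varphi(h)$, and sections map to sections.

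To prove the identity, I write $g$ through its unique decomposition $g=zf$ with $z\in H$ and $f\in F$. Then $hg=(hz)f$. Since $H$ is a subgroup, $hz\in H$, and $f$ is still in $F$. By uniqueness of the decomposition, $(hz)f$ is \emph{the} decomposition of $hg$. Hence
\[\p(hg)=\varphi(hz)=\varphi(h)+\varphi(z)=\varphi(h)+\p(g),\]
using that $\varphi$ is a group isomorphism onto $(\Z,+)$. This is where the choice of right cosets $Hf$ matters: left multiplication by $h$ only changes the $H$-component and leaves the $F$-component alone. With left cosets this step would fail.

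Next I show the set equality $hV_{[a,b]}=V_{[\varphi(h)+a,\varphi(h)+b]}$ by double inclusion.

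For $\subseteq$: take $g\in V_{[a,b]}$, so $\p(g)\in[a,b]$. By the identity, $\p(hg)\in[\varphi(h)+a,\varphi(h)+b]$.

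For $\supseteq$: take $g'$ with $\p(g')\in[\varphi(h)+a,\varphi(h)+b]$ and set $g=h^{-1}g'$. Since $h^{-1}\in H$ and $\varphi(h^{-1})=-\varphi(h)$, the identity gives $\p(g)=\p(g')-\varphi(h)\in[a,b]$. So $g\in V_{[a,b]}$ and $g'=hg\in hV_{[a,b]}$.

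The interval $[\varphi(h)+a,\varphi(h)+b]$ is a finite interval of $\Z$, so $hV$ is a section. Its length is $(\varphi(h)+b)-(\varphi(h)+a)+1=b-a+1=l(V)$.

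The only non-routine step is checking that the decomposition of $hg$ really is $(hz)f$, which rests on uniqueness of the decomposition together with the right-coset convention.
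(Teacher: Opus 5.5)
Your proof is correct and follows essentially the same route as the paper: you compute $p(hzf)=\varphi(hz)=\varphi(h)+\varphi(z)$ using uniqueness of the right-coset decomposition, then prove the set equality by double inclusion, with $h^{-1}$ handling the reverse direction. The only difference is that you state the identity $p(hg)=\varphi(h)+p(g)$ as a standalone step, whereas the paper uses it inline within each inclusion.
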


\begin{proof}
  It is sufficient to show that  $hV= V_{[ \varphi(h) + a , \varphi(h) + b ] }$.
  We show this by double inclusion.
  
  Let $hv \in hV$, there is $n\in H$ and $f\in F$ such that $hv = hnf$ and $\varphi(n) \in [ a,b ] $ .
  Then $\p(hv) = \p(hnf) = \varphi(hn) = \varphi(h) + \varphi(n) \in [ \varphi(h) + a , \varphi(h) + b ] $.
  Finaly we have then that $hv \in V_{[ \varphi(h) + a , \varphi(h) + b ] }  $ and so  $hV \subset V_{[ \varphi(h) + a , \varphi(h) + b ] } $.

    Now consider $ g\in V_{[ \varphi(h) + a , \varphi(h) + b ] } $.
    With the same method we have that $ h^{-1}g \in V $, so $ g = hh^{-1}g \in hV $.
\end{proof}

The goal of the next definition is to formalize the impact on the ``$\Z$-spine'' of a multiplying by an element in a vertebra.

\newcommand\imp{\text{imp}}

\begin{definition}[Impact]
  Let $s\in G $ and $k\in \mathbb{Z}$, we will call impact the value
  $$ \imp_k (s) = max \{ \rvert p(gs) - p(g)\rvert , g \in V_{\{k\}}\}$$
\end{definition}

\begin{proposition}
     $\imp_k ( s) $ does not depend on $k$.
\end{proposition}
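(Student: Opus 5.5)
The plan is to show that left multiplication by a suitable element of $H$ carries vertebra $V_{\{k\}}$ bijectively onto vertebra $V_{\{k'\}}$, while preserving the differences $\p(gs)-\p(g)$. Then the set $\{\rvert \p(gs)-\p(g)\rvert : g\in V_{\{k\}}\}$, over which the maximum is taken, is the same set for every $k$.

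\textbf{Step 1: $\p$ is $H$-equivariant on the left.} For $h\in H$ and $g\in G$ we have $\p(hg)=\varphi(h)+\p(g)$. Write $g=zf$ with $z\in H$ and $f\in F$. Then $hg=(hz)f$ with $hz\in H$, and by uniqueness of the decomposition, $\p(hg)=\varphi(hz)=\varphi(h)+\varphi(z)$. This is exactly the computation in the proof of the preceding lemma, and it is where the choice of right cosets matters.

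\textbf{Step 2: the differences are invariant.} For fixed $s\in G$, $h\in H$ and $g\in G$, Step 1 gives
\[\p(hgs)-\p(hg)=\bigl(\varphi(h)+\p(gs)\bigr)-\bigl(\varphi(h)+\p(g)\bigr)=\p(gs)-\p(g).\]

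\textbf{Step 3: transfer between vertebrae.} Given $k,k'\in\Z$, take $h=\varphi^{-1}(k'-k)\in H$. By the preceding lemma applied to the section $V_{[k,k]}$, left multiplication by $h$ maps $V_{\{k\}}$ onto $V_{\{k'\}}$, and it is a bijection with inverse given by multiplication by $h^{-1}$. Combining this with Step 2, the two sets
\[\{\rvert \p(gs)-\p(g)\rvert : g\in V_{\{k\}}\}\quad\text{and}\quad\{\rvert \p(gs)-\p(g)\rvert : g\in V_{\{k'\}}\}\]
coincide, so their maxima coincide and $\imp_k(s)=\imp_{k'}(s)$.

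\textbf{Finiteness of the maximum.} Each vertebra $V_{\{k\}}=\varphi^{-1}(k)F$ is finite, since $\varphi$ is injective and $F$ is finite. Hence the maximum is taken over a finite nonempty set and is well defined.

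I do not expect a real obstacle. The only delicate point is Step 1, where one must multiply on the left by $h\in H$; the right-coset decomposition $g=zf$ is what makes this compatible with $\p$. Multiplying by $s$ on the right is harmless because $s$ appears identically on both sides of the comparison.
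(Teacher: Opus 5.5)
Your proof is correct and follows essentially the same route as the paper: you show that $|p(gs)-p(g)|$ is invariant under left multiplication by elements of $H$, and that such a multiplication maps $V_{\{k\}}$ bijectively onto $V_{\{k'\}}$. Isolating the identity $p(hg)=\varphi(h)+p(g)$ and applying it to both $g$ and $gs$ is a tidier packaging of the paper's explicit decomposition $fs=h'f'$, and your remark that each vertebra $\varphi^{-1}(k)F$ is finite (so the maximum exists) supplies a detail the paper leaves implicit.
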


\begin{proof}
  We fix ${s\in G}$.
  Denote ${\psi (g) = \rvert \p(gs) - \p(g)\rvert}$ for any $g\in G$, so that ${\imp_k(s) = \max_{g\in V_{\{k\}}}\psi(g)}$.
  Let $\p \in \mathbb{Z} $, we have $V_{\{\p\}} = L_{\varphi^{-1}(\p)}(V_{\{0\}}) $.
  Let us show that $\forall h \in H, \psi \circ L_h = \psi $ which is enough to conclude the proposition by the previous expression of $\imp_k$.

  Taking $g=zf \in G$  where $z\in H$ and $f\in F$, it holds
  $$\psi \circ L_h ( g) = \psi (hzf) = \rvert \p(hzfs) - \p(hzf)\rvert $$ we know that $fs \in G$ and so $\exists! h',f' \in H \times F, fs = h'f' $.
  Which give us ${hzfs = hzh'f' }$ where $hzh' \in H$ because $H$ is a sub group.

  We thus have:
  \begin{equation} 
    \begin{split}
      \psi \circ L_h(g)  & = \rvert \varphi(hzh') - \varphi(hz) \rvert \\
                         & = \rvert \varphi(h) + \varphi(zh') - \varphi(h) - \varphi(z) \rvert \\
                         & = \rvert \p(zh'f') - \p(zf) \rvert \\
                         & = \rvert \p(gs) - \p(g) \rvert \\
                         & = \psi(g) 
    \end{split}
  \end{equation}
\end{proof}

From this proposition, we denote ${\imp(s)}$ the value of ${\imp_k(s)}$.
We now consider any CA $\Phi: A^G\to A^G$ of neighborhood $S$ and define constant $\Delta = \max \{ \imp(s):s\in S \} $.  The next lemma shows that $\Delta$ is a sufficient length for a section to have its left arm $S$-disconnected from its right arm, and therefore in the presence of a blocking word blocking such a section, the dynamics on the right arm is independent from the dynamics on the left arm.

\begin{lemma}\label{lem:disconnect}
    Let $V= V_{[ a; b ]}$ be a section such that $l(V) \geq  \Delta$ and let $u\in A^L$ be a $V$-blocking word, then $\forall x, y \in A^G$, if ${x_{|L}=y_{|L}=u}$ and $x_{|Br_+(V)}=y_{|Br_+(V)}$ then it holds:
    $$\forall t \in \mathbb{N}, \Phi^t(x)_{|Br_+(V)} = \Phi^t(y)_{|Br_+(V)}. $$
    The symmetric implication holds for $Br_-$.
\end{lemma}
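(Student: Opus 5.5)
We argue by induction on $t$. The hypothesis $l(V)\ge\Delta$ will be used in exactly one place: to show that a cell of the right arm never reads, through the neighborhood $S$, a cell of the left arm.

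\textbf{Step 1: a geometric fact.} For every $g\in Br_+(V)$ and every $s\in S$, we claim $gs\in V\cup Br_+(V)=V_{[a,\infty[}$. Indeed, $\p(g)\ge b+1$, and by definition of the impact (which does not depend on the vertebra, by the previous proposition),
\[|\p(gs)-\p(g)|\le \imp(s)\le\Delta.\]
Hence $\p(gs)\ge b+1-\Delta$. Since $l(V)=b-a+1\ge\Delta$, we get $b+1-\Delta\ge a$, so $\p(gs)\ge a$. This is exactly where the length assumption enters, and the boundary case $l(V)=\Delta$ needs care. If the convention turned out to be off by one, I would replace the hypothesis by $l(V)\ge\Delta+1$ or strengthen the bound accordingly. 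With the stated inequalities, however, the computation goes through.

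\textbf{Step 2: induction on $t$.} Let $x,y$ be as in the statement. We prove $\Phi^t(x)_{|Br_+(V)}=\Phi^t(y)_{|Br_+(V)}$ for all $t$.

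For $t=0$ this is the hypothesis.

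For the inductive step, two facts are combined.
\begin{itemize}
\item Since $u$ is $V$-blocking and $x_{|L}=y_{|L}=u$, we have $\Phi^t(x)_{|V}=\Phi^t(y)_{|V}$ for every $t$.
\item By the induction hypothesis, $\Phi^t(x)$ and $\Phi^t(y)$ agree on $Br_+(V)$.
\end{itemize}
Together, $\Phi^t(x)$ and $\Phi^t(y)$ agree on all of $V_{[a,\infty[}$.

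Now take $g\in Br_+(V)$. Then $\Phi^{t+1}(x)(g)=\mu\big((g^{-1}\Phi^t(x))_{|S}\big)$, and $(g^{-1}z)(s)=z(gs)$. So $\Phi^{t+1}(x)(g)$ depends only on the values of $\Phi^t(x)$ on $gS$. By Step 1, $gS\subseteq V_{[a,\infty[}$. Therefore $\Phi^{t+1}(x)(g)=\Phi^{t+1}(y)(g)$.

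\textbf{Step 3: the left arm.} The symmetric statement for $Br_-$ follows the same way. For $g$ with $\p(g)\le a-1$, Step 1 gives $\p(gs)\le a-1+\Delta\le b$, so $gS\subseteq V_{]-\infty,b]}$, and the same induction applies.

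The only delicate point is the bookkeeping in Step 1, namely the sign of $s$ and the use of right multiplication in $gs$. This is consistent with the definition of the impact, which uses $gs$, and with the choice of right cosets, which makes $\p$ invariant under left multiplication by $H$.
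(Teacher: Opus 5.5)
Your proof is correct and follows the paper's argument essentially verbatim. Both proceed by induction on $t$: the blocking property gives agreement on $V$, and the bound $p(gs)\ge p(g)-\mathrm{imp}(s)\ge b+1-\Delta\ge a$ shows that cells of $Br_+(V)$ only read cells of $V\cup Br_+(V)$. Your off-by-one hedge is unnecessary, since $l(V)=b-a+1\ge\Delta$ is exactly the inequality $b+1-\Delta\ge a$ that the step requires.
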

\begin{proof}
  We show the property ${\Phi^t(x)_{|Br_+(V)} = \Phi^t(y)_{|Br_+(V)}}$ by induction on $t$ (the proof is similar for $Br_-$).
  The case ${t=0}$ is just the hypothesis.
  
  Suppose now that the equality holds for $t-1\geq 0$: 
  \[\Phi^{t-1}(x)_{|Br_+(V)} = \Phi^{t-1}(y)_{|Br_+(V)}.\]
  Since $u$ is $V$-blocking, we have ${\Phi^{t'}(x) _{|V}= \Phi^{t'}(y)_{|V} }$ for all $t'\geq 0$.
  Considering ${i\in B_+(V)}$, we have:
  \[\Phi^t(x)(i) = \mu(( \Phi^{t-1}(x) \circ L_i  )_{|S}\]
  where $\mu$ is the local rule of $\Phi$, and the same holds for $y$.

  For any $s\in S$, ${L_i(s)=is}$ and
  \[p(is)\geq p(i)-\imp(s)\geq p(i) - \Delta \geq b+1 - \Delta\geq a.\]
  We thus have
  \[ ( \Phi^{t-1}(x) \circ L_i  )_{|S}=( \Phi^{t-1}(y) \circ L_i  )_{|S}.\]
  We conclude from the above equalities that ${\Phi^t(x)(i) = \Phi^t(y)(i)}$ for all ${i\in B_+}$.
  The lemma follows by induction on $t$.
\end{proof}

On the other hand, the structure of $G$ allows to glue blocking words whose support are sections to form larger blocking words.

\begin{lemma}\label{lem:glueing}
  For $i=1,2$ let $u_i$ be a word of domain $L_i = V_{[ p_i,q_i ] }$ which is $V_i$-blocking for $V_i = V_{[ a_i,b_i ] }$. 
  Suppose moreover $l(V_i) \geq \Delta$ and $ q_1 < p_2 $ and ${a_1\leq b_2}$. 
  Let $x\in A^G$ be any configuration containing both $u_1$ and $u_2$, and define $u= x_{|L}$ for $L= V_{[ p_1,q_2 ]} $.
  Then $u$ is $V$-blocking for $V= V_{[ a_1,b_2 ]} $.
\end{lemma}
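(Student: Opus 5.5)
The plan is to prove the $V$-blocking property directly from the definition. Fix two configurations $y,z\in A^G$ with $y_{|L}=z_{|L}=u$. We must show $\Phi^t(y)_{|V}=\Phi^t(z)_{|V}$ for all $t$. Since $L_1,L_2\subseteq L$, both $y$ and $z$ contain $u_1$ on $L_1$ and $u_2$ on $L_2$. Blocking alone already gives agreement on $V_1\cup V_2$ at all times. The difficulty is the part of $V=V_{[a_1,b_2]}$ lying strictly between the two blocked sections, when $b_1<a_2$. There the two orbits could a priori be influenced by the distinct contents of $y$ and $z$ outside $L$.

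To handle this, I will compare $y$ and $z$ through hybrid configurations, cut along a vertebra, and use Lemma~\ref{lem:disconnect}, which requires $l(V_i)\geq\Delta$. Choose an integer $c$ with $c\geq q_1$ and $c<p_2$, possible since $q_1<p_2$. Define $w$ to equal $z$ on $V_{]-\infty,c]}$ and $y$ on $V_{]c,\infty[}$. Then $w_{|L}=u$, because $y$ and $z$ agree on $L$. In particular $w$ contains $u_1$ and $u_2$.

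\begin{itemize}
\item Compare $w$ with $y$. They agree on $V_{]c,\infty[}$ and both contain $u_2$. Provided $Br_+(V_2)\subseteq V_{]c,\infty[}$, Lemma~\ref{lem:disconnect} gives $\Phi^t(w)=\Phi^t(y)$ on $Br_+(V_2)$. Blocking of $u_2$ gives agreement on $V_2$, so altogether on $V_{[a_2,\infty[}$.
\item Symmetrically, $w$ and $z$ agree on $V_{]-\infty,c]}$ and both contain $u_1$. Provided $Br_-(V_1)\subseteq V_{]-\infty,c]}$, we get $\Phi^t(w)=\Phi^t(z)$ on $V_{]-\infty,b_1]}$.
\item To transfer across, also use the mirrored hybrid $w'$, equal to $y$ on the left of $c$ and to $z$ on the right. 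Combine this with the fact that any two of $y,z,w,w'$ coincide on $V_1$ and on $V_2$ at all times, since all four contain $u_1$ and $u_2$.
\end{itemize}

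The aim is then a chain of equalities $\Phi^t(y)=\Phi^t(w)$ or $\Phi^t(w')$, $=\dots=\Phi^t(z)$, at each vertebra of $[a_1,b_2]$.

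The main obstacle is the middle region $V_{]b_1,a_2[}$. There we know $\Phi^t(w)=\Phi^t(y)$ only from the right-hand argument, and $\Phi^t(w)=\Phi^t(z)$ only from the left-hand one, and these ranges may not overlap. I would first try to choose $c$ so that both containment conditions above hold with room to spare. Concretely, I want $c\geq\max(q_1,a_1-1)$ and $c\leq\min(p_2-1,b_2)$, which uses $q_1<p_2$ and $a_1\leq b_2$. This should make the left region $V_{]-\infty,b_1]}$ and the right region $V_{[a_2,\infty[}$ each reach past $c$.

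If that is insufficient, the fallback is to redo the induction of Lemma~\ref{lem:disconnect} directly. On $V_{]-\infty,b_1]}$ the orbits of $z$ and $w$ agree, and on $V_{[a_2,\infty[}$ those of $y$ and $w$ agree. I would show by induction on $t$ that $\Phi^t(y)$ and $\Phi^t(z)$ agree on all of $V_{]b_1-\Delta,a_2+\Delta[}$. The point is that every cell in the gap only reads cells within distance $\Delta$ in the projection $\p$, and these lie inside either the left or the right controlled region.

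If the stated hypotheses do not give the needed inequalities between $a_1,b_1,p_1,q_1$ and $a_2,b_2,p_2,q_2$, I would state the implicit assumption that each $V_i$ lies within the span of $L_i$.
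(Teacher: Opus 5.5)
There is a genuine gap: as set up, your hybrid comparisons say nothing about the gap $V_{]b_1,a_2[}$. You apply Lemma~\ref{lem:disconnect} on the \emph{outer} arms $Br_+(V_2)$ and $Br_-(V_1)$. This only gives $\Phi^t(w)=\Phi^t(y)$ on $V_{[a_2,\infty[}$ and $\Phi^t(w)=\Phi^t(z)$ on $V_{]-\infty,b_1]}$, and the mirrored hybrid $w'$ gives the same two regions. No choice of $c$ helps. Taking $t=0$ in the definition of blocking forces $V_i\subseteq L_i$ (when $|A|\geq 2$), so $b_1\le q_1\le c<p_2\le a_2$ always holds, and the whole gap separates your two regions. (This also shows your ``implicit assumption'' is automatic.)

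The missing observation is that $w$ agrees with $z$ not only on $V_{]-\infty,c]}$ but also on $L$, hence on $V_{]-\infty,q_2]}\supseteq Br_-(V_2)$. Applying Lemma~\ref{lem:disconnect} to $u_2$ on the \emph{inner} arm then gives $\Phi^t(w)=\Phi^t(z)$ on $V_{]-\infty,a_2[}$. Symmetrically, $w$ agrees with $y$ on $V_{[p_1,\infty[}\supseteq Br_+(V_1)$, which gives $\Phi^t(w)=\Phi^t(y)$ on $V_{]b_1,\infty[}$. These two regions overlap exactly on the gap, and blocking handles $V_1\cup V_2$. This is the paper's argument: its hybrid is $x$ to the left of $L$, $u$ on $L$ and $y$ to the right, and it compares every $y$ with the fixed $x$.

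Your fallback aims at the right statement, and would in fact give a shorter, hybrid-free proof. However, the justification you give is wrong. A cell deep inside a wide gap reads only gap cells, not cells of your ``controlled regions''. Moreover, those regions record agreement of $w$ with $y$ or with $z$, not of $y$ with $z$.

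The correct induction for $W=V_{]b_1-\Delta,a_2+\Delta[}$ runs as follows.
\begin{enumerate}
\item Since $l(V_i)\ge\Delta$, we have $W\subseteq V\subseteq L$, which gives the base case $t=0$.
\item The cells of $W$ outside the gap lie in $V_1\cup V_2$, where blocking gives agreement at every time.
\item A gap cell $i$ reads only cells $is$ with $p(is)\in[b_1+1-\Delta,a_2-1+\Delta]$, i.e.\ cells inside $W$, where the induction hypothesis at time $t-1$ applies.
\end{enumerate}
Agreement on $V=V_1\cup V_{]b_1,a_2[}\cup V_2$ then follows directly.
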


\begin{proof}
  Consider any ${y\in A^G}$ with ${y_{|L}=u}$, we define configuration ${z\in A^G}$ by
  \[
    z(g) =
    \begin{cases}
      x(g)&\text{ if }g\in Br_-(L),\\
      u(g)&\text{ if }g\in L,\\
      y(g)&\text{ if }g\in Br_+(L).\\
    \end{cases}
  \]
  We first apply Lemma~\ref{lem:disconnect} with configurations $x$ and $z$ and blocking word $u_2$ which gives
  \[\forall t \in \mathbb{N} , \Phi^t(x)_{|Br_-(V_2)}=\Phi^t(w)_{|Br_-(V_2)}.\]
  Similarly, applying Lemma~\ref{lem:disconnect} for configurations $z$ and $y$ and blocking word $u_1$ we get
  \[\forall t \in \mathbb{N} , \Phi^t(z)_{|Br_+(V_1)}=\Phi^t(y)_{|Br_+(V_1)}.\]
  Combining both equalities above, we deduce
  \[\forall t \in \mathbb{N},\forall i \in V_{] b_1,a_2 [ }, \Phi^t(x)(i) =  \Phi^t(z)(i) = \Phi^t(y)(i).\]
  Finally, since ${V=V_1\cup V_2\cup V_{] b_1,a_2 [ }}$ and since $u_1$ and $u_2$ are subpatterns of $u$ we deduce that the equality ${\Phi^t(x)(i) =  \Phi^t(y)(i)}$ holds for all ${t\in\N}$ and all ${i\in V}$.
  This shows that $u$ is a $V$-blocking word.
\end{proof}

Combining the two previous lemma, we can show that any configuration containing infinitely many blocking words to the left and to the right, and which all block a section of length at least $\Delta$, is actually an equicontinuity point.

\begin{lemma}\label{lem:recglueing}
  Suppose $x\in A^G$ verifies $\forall k \in \N$$ , \exists u_i\in A^{L_i}$ for ${i=1,2}$ with $L_1 \subset Br_-(V_{\{-k\}})$ and $L_2 \subset Br_+(V_{\{k\}})$, and $u_i = x_{|L_i}$ where $u_i$ is $V_i$-blocking with the $V_i$ are sections which verify ${V_1\subseteq Br_{-}(V_{\{-k\}})}$ and ${V_2\subseteq Br_{+}(V_{\{k\}})}$ and $l(V_i) > \Delta$. Then $x$ is an equicontinuity point. 
\end{lemma}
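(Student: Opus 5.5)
To prove that $x$ is an equicontinuity point, fix $\epsilon>0$. With the chosen generating set $F\cup\{\varphi^{-1}(1)\}$, the ball $B_G(1_G,m)$ corresponding to precision $\epsilon$ is finite, so $\p$ is bounded on it. Hence there is $k\in\N$ with $B_G(1_G,m)\subseteq V_{[-k,k]}$. It therefore suffices to find $\delta>0$ such that every $y\in B(x,\delta)$ satisfies $\Phi^t(y)_{|V_{[-k,k]}}=\Phi^t(x)_{|V_{[-k,k]}}$ for all $t$. Said differently, we want a finite pattern of $x$ that is $V_{[-k,k]}$-blocking. We then take $\delta$ small enough that $y\in B(x,\delta)$ forces $y$ to agree with $x$ on the (finite) support of that pattern; this is possible since the support is contained in some ball $B_G(1_G,p)$.

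To build this pattern, apply the hypothesis with this $k$. We obtain $u_1=x_{|L_1}$, which is $V_1$-blocking with $V_1,L_1\subseteq Br_-(V_{\{-k\}})$, and $u_2=x_{|L_2}$, which is $V_2$-blocking with $V_2,L_2\subseteq Br_+(V_{\{k\}})$, and $l(V_i)>\Delta$. By the extension remark following the definition of blocking words, we may enlarge each $L_i$ to a section $V_{[p_i,q_i]}$; the pattern $x$ restricted to it is still $V_i$-blocking. Lemma~\ref{lem:glueing} needs $L_1$ to lie strictly to the left of $L_2$, and $L_1,L_2$ are not assumed to be sections. The natural fix is to enlarge each support to the smallest section containing it. Since $L_1$ and $V_1$ lie left of $-k$ and $L_2$ and $V_2$ lie right of $k$, the enlarged $L_1$ can be chosen inside $Br_-(V_{\{-k\}})$ and the enlarged $L_2$ inside $Br_+(V_{\{k\}})$, so $q_1<-k\le k<p_2$. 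Moreover $a_1<-k<k<b_2$, so the hypotheses of Lemma~\ref{lem:glueing} hold. It yields that $u=x_{|V_{[p_1,q_2]}}$ is $V_{[a_1,b_2]}$-blocking. Since $V_{[-k,k]}\subseteq V_{[a_1,b_2]}$, restricting the blocked region shows that $u$ is $V_{[-k,k]}$-blocking, which gives the required $\delta$.

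I expect the main obstacle to be the bookkeeping in the enlargement step. We must make sure that each blocked section $V_i$ is compatible with the enlarged support. In particular, Lemma~\ref{lem:disconnect} (used inside Lemma~\ref{lem:glueing}) relies on the blocked section separating the arms, which works because blocking concerns the whole section $V_i$ of length at least $\Delta$, wherever the support lies. If the support of $u_i$ were to cross the middle region, the separation used in Lemma~\ref{lem:glueing} would fail. This is exactly why the hypothesis places both $L_i$ and $V_i$ beyond $\pm k$, and I would check carefully that the enlarged supports still stay on their respective sides. Finally, the dependence on $\epsilon$ enters only through the choice of $k$, so the argument applies uniformly for every $\epsilon$, and $x$ is an equicontinuity point.
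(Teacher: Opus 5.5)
Your proof is correct and follows the paper's argument: apply the hypothesis at a $k$ determined by $\epsilon$, glue $u_1$ and $u_2$ with Lemma~\ref{lem:glueing}, restrict the blocked section to one containing the ball, and take $\delta$ from the finite support of the glued pattern. Your extra steps only make explicit details the paper glosses over: enlarging the $L_i$ to sections, and choosing $k$ from the finiteness of $B_G(1_G,m)$ instead of relying on the choice of generators as the paper does. The enlargement is safe, since for $|A|\geq 2$ the $t=0$ case of the blocking definition already forces $V_i\subseteq L_i$.
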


\begin{proof}
  Take any ${\epsilon>0}$ and choose $ k = 1+ \lceil log_2(\epsilon)\rceil $.
  By hypothesis and using Lemma~\ref{lem:glueing}, we have that $x_{|L}$ is a $V$-blocking word where
  \begin{align*}
    L &= L_1 \cup (Br_+(L_1) \cap Br_-(L_2) ) \cup L_2\text{ and}\\
    V &= V_1 \cup (Br_+(V_1) \cap Br_-(V_2) ) \cup V_2.
  \end{align*}
  Choose ${p= \max_{g \in L} \{ |p(g)| \} + 1 }$ so that ${L\subset V_{[ -(p-1) , p-1 ] } \subset B_G(1_G,p)}$ (by our choice of generators for $G$).
  With ${\delta= 2^{-p}}$, any ${y\in B(x,\delta)}$ verifies ${y_{|L}=x_{|L}}$.
  Since $x_{|L}$ is a $V$-blocking word and since ${B_G(1_G,k)\subseteq V}$ (by our choice of generators) we deduce that
  \[\forall t \in \mathbb{N}, \Phi^t(y)\in B(\Phi^t(x),\epsilon).\]
  The lemma follows.
\end{proof}

From the above lemmas and Proposition~\ref{prop:nonsensitiveblocking} we deduce the \KURKA{}'s dichotomy.

\begin{theorem}\label{theo:virtuallyfree}
  If group $G$ is virtually $\Z$, then a CA is sensitive to initial conditions if and only if it has no equicontinuity points.
\end{theorem}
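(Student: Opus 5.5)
One direction is immediate: an equicontinuity point rules out sensitivity, as already observed after the definitions. So the plan is to assume $\Phi$ is not sensitive and build an equicontinuity point via Lemma~\ref{lem:recglueing}. By Proposition~\ref{prop:nonsensitiveblocking}, for every $k$ there is a $B_G(1_G,k)$-blocking word. First I turn this into a blocking word on a section. With our generating set $F\cup\{\varphi^{-1}(1)\}$, a ball $B_G(1_G,k)$ with $k$ large enough (say $k\geq \Delta+\max_{f\in F} d(1_G,f)$) contains a section $V_0=V_{[0,\Delta]}$ of length $>\Delta$. This holds because $V_{\{j\}}=\varphi^{-1}(j)F$ and each element $\varphi^{-1}(j)f$ has word length at most $|j|+1$. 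Restricting the blocked region, we get a word $u$ that is $V_0$-blocking. Extending its support, we may assume the support is itself a section $L=V_{[p,q]}$ containing $V_0$ (this uses finiteness of $F$: any finite support lies in some section).

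Next comes translation. For $h\in H$, the pattern $h.u$ is $hV_0$-blocking with support $hL$. By the lemma on translating sections by $H$, $hV_0=V_{[\varphi(h),\varphi(h)+\Delta]}$ and $hL=V_{[\varphi(h)+p,\varphi(h)+q]}$ are again sections of the same lengths. Taking $h_n=\varphi^{-1}(n\cdot N)$ with $N=q-p+1$ and $n\in\Z$, the supports $h_nL$ are pairwise disjoint sections going off to $\pm\infty$. I define $x$ to agree with $h_n.u$ on $h_nL$ for every $n$ and to be arbitrary elsewhere. This is consistent because the supports are disjoint.

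Finally I check the hypothesis of Lemma~\ref{lem:recglueing}. For each $k$, choose $n$ large enough that $h_nL$ and $h_nV_0$ lie in $Br_+(V_{\{k\}})$, and similarly choose $h_{-n}$ with both sets in $Br_-(V_{\{-k\}})$. Then $u_1=x_{|h_{-n}L}$ and $u_2=x_{|h_nL}$ are blocking words on sections of length $>\Delta$ with the required placement, so $x$ is an equicontinuity point. The main point to get right is the first step: ensuring that balls contain sections of length exceeding $\Delta$, and that blocking is inherited when we restrict to the section and enlarge the support to a section. Both follow from the remarks after the blocking-word definition together with the explicit word-length estimate. The glueing condition $a_1\le b_2$ needed inside Lemma~\ref{lem:recglueing} holds automatically, since $V_1$ lies to the left of $V_2$.
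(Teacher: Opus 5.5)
Your proposal is correct and follows essentially the same route as the paper. You extract a blocking word on a section from Proposition~\ref{prop:nonsensitiveblocking}, fill $G$ with its translates by powers of $\varphi^{-1}(N)$, and conclude with Lemma~\ref{lem:recglueing}. The paper builds the same configuration through the periodic map $m$ and $x=u\circ m$, and your extra care in taking $V_0=V_{[0,\Delta]}$ of length $\Delta+1>\Delta$ (as Lemma~\ref{lem:recglueing} requires) and keeping $V_0\subseteq L$ (which the glueing argument implicitly needs) fills in details the paper's proof passes over.
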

\begin{proof}
  Consider a non-sensitive CA with neighborhood $S$ and let ${\Delta = \imp(S)}$ as above.
  From Proposition~\ref{prop:nonsensitiveblocking} there must exist a $V$-blocking word $u$ with ${l(V)\geq\Delta}$.
  Up to translation and extension we can suppose that the domain $L$ of $u$ is ${L=V_{[0,n]}}$.

  Let us now define a map ${m:G\to L}$ that we will use later to define a configuration covered by translated copies of $u$:
  \[\forall g \in G,  m(g) = \varphi^{-1}(p(g) \bmod n+1) (\varphi^{-1}(p(g)))^{-1} g.\]
  ${m(g)\in L}$ because if ${g=zf}$ with ${z\in H}$ and ${f\in F}$, then $p(g)=\varphi(z)$ and a straightforward computation shows that ${p(m(g))=\varphi(z) \bmod n+1}$.
  Moreover, letting ${h = \varphi^{-1}(n+1)}$ we have ${\forall g\in G, m(hg)=m(g).}$

  We then define configuration $x = u\circ m$ which verifies ${x_{|L} = u}$ and ${x(g)=x(hg)}$ for all ${g\in G}$.
  Concretely, for any ${i\in\Z}$, $x$ contains a translated copy of $u$ at position $h^i$.
  Thus for any $k$, we can find ${i,j\in\Z}$ with ${i+n<-k}$ and ${k<j}$ such that ${x_{|L_1}}$ and ${x_{|L_2}}$ are translated copies of $u$ with ${L_1=V_{[i,i+n]}}$ and ${L_2=V_{[j,j+n]}}$, and which are $V_1$-blocking and $V_2$-blocking respectively with ${V_1\subseteq Br_{-}(V_{\{-k\}})}$ and ${V_2\subseteq Br_{+}(V_{\{k\}})}$.
  In other words, $x$ verifies the hypothesis of Lemma~\ref{lem:recglueing}, so it is an equicontinuity point.
\end{proof}

\section{Lifting a counter-example from a subgroup}

Let $G$ be a group with subgroup $H$. 
Consider a CA on $H$ with alphabet $A$, neighborhood $S\subseteq H$ and local map ${\mu: A^S\to A}$. It defines a global map ${\Phi_H:A^H\to A^H}$.
Since $S\subseteq G$, $\mu$ also defines a global map ${\Phi_G:A^G\to A^G}$.
In this section we show that this ``lift'' from $\Phi_H$ to $\Phi_G$ (called induction in \cite[Section 1.7]{celautgrp}) preserves both non-sensitivity to initial conditions and absence of equicontinuity points.
Therefore whenever $\Phi_H$ is a counter-example to Kurka's dichotomy on $H$ then $\Phi_G$ is a counter-example to Kurka's dichotomy on $G$.

Let us fix a set of generators $E_H$ for $H$ and choose a set of generators $D$ for $G$ that extends $E_H$ (\textit{i.e.} ${E_H\subseteq D}$). We will denote by ${d_H}$ and $d_G$ the distances on $H$ and $G$ associated to this particular choices of generators, and $d^H$ and $d^G$ the corresponding Cantor metrics for configurations.

Note that for our particular choice of generators for any ${h_1,h_2\in H}$ it holds that ${d_G(h_1,h_2)\leq d_H(h_1,h_2)}$.

We now make a particular choice of one representative in each left-coset $gH$ as follows: take the minimum $k$ such that ${gH\cap B_G(1_G,k)\neq\emptyset}$ and pick an element in this intersection (so in particular for $H$ we pick $1_H=1_G$). This way, we get a set ${F\subseteq G}$ of representative with the following properties:
\begin{itemize}
\item $G = \cup_{f \in F} fH $
\item    $\forall f \in F,\forall g \in fH, ||g||_G \geq ||f||_G$ where $||q||_G = d_G(1,q),\forall q \in G$
\item $\forall g \in G, \exists! f,h \in F \times H , g = fh $ 
\end{itemize}

From the above, we then define a projection ${\pi : G\to H}$ by
\[\forall f \in F, \forall g \in fH, \pi(g) = f^{-1}g \in H.\]
This projection allows to define $||g||_H=||\pi(g)||_H$ for any $g\in G$.

We also define $\omega : G\to\N$ by ${\omega(g) = d_G(1,gH)}$
which is equivalent to $\forall f\in F, \forall g \in fH, \omega(g) = ||f||_G$.
A \emph{rectangle} of height $k$ and length $l$ (both integers) is the set ${R(k,l) = \{ g \in G \ |\ \omega(g) \leq k, ||g||_H \leq l  \}}$.

The following proposition shows that rectangles are comparable to balls in $G$.
\begin{proposition}\ 
  \begin{enumerate}
  \item $\forall k,l \in \mathbb{Z} , R(k,l) \subset B_G(1,k+l) $
  \item $\forall  k  \in \mathbb{Z} , B_G(1,k) \subset R(k, \Lambda (k)), \textrm{ where } 
    \Lambda(k) = \max_{g \in B_G(1,k)} ||g||_H $
  \end{enumerate}
\end{proposition}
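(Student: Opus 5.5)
Both inclusions follow from the decomposition $g=f\pi(g)$ with $f\in F$ and $\pi(g)\in H$, together with two facts already recorded: $\|f\|_G=\omega(g)$ by the choice of representatives, and $d_G(h_1,h_2)\le d_H(h_1,h_2)$ for $h_1,h_2\in H$, since the generating set $D$ of $G$ extends $E_H$.

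For item 1, take $g\in R(k,l)$ and write $g=fh$ with $f\in F$ and $h=\pi(g)\in H$. The triangle inequality gives $\|g\|_G\le d_G(1,f)+d_G(f,fh)$. By left invariance of the word metric, $d_G(f,fh)=d_G(1,h)$. Then $d_G(1,h)\le d_H(1,h)=\|h\|_H=\|g\|_H\le l$. We also have $d_G(1,f)=\|f\|_G=\omega(g)\le k$. Hence $\|g\|_G\le k+l$, so $g\in B_G(1,k+l)$.

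For item 2, take $g\in B_G(1,k)$. Since $\omega(g)=d_G(1,gH)\le d_G(1,g)\le k$, the height condition holds. The length condition $\|g\|_H\le\Lambda(k)$ holds by the very definition of $\Lambda(k)$ as a maximum over $B_G(1,k)$. This maximum is finite because the ball is finite, so $g\in R(k,\Lambda(k))$.

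There is no real obstacle here. The only point needing care is the left invariance of $d_G$: it is what converts $d_G(f,fh)$ into $d_G(1,h)$, and it holds because the edges of $\cayley(G,D)$ correspond to right multiplication by generators. For negative $k$ or $l$ the rectangles and balls are empty, so those cases are trivial.
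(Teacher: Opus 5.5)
Your proof is correct and follows the paper's argument. In item 1, your triangle inequality with left invariance is the same as the paper's concatenation of a $D$-word for $f$ with an $E_H$-word for $\pi(g)$. In item 2 you use the same bound $\omega(g)\le\|g\|_G$, reading it directly off $\omega(g)=d_G(1,gH)$ rather than from the minimality of the chosen representatives, and then apply the definition of $\Lambda(k)$.
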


\begin{proof}
  For the first item, let $g\in R(k,l)$, $\exists! f,h \in F \times H , g = fh$. By definition of $\omega(g)$ there are $e_1 , e_2 , ..., e_{\omega(g) } \in D$ with $e_1e_2...e_{\omega(g)} = f  $.  Besides, there are $b_1,b_2,...,b_{||h||_H} \in E_H$ with $b_1b_2...b_{||h||_H} = h$, therefore we can write
  \[g = fh = e_1e_2...e_{\omega(g)}b_1b_2...b_{||h||_H}.\]
Since $\omega(g) \leq k$ and $||h||_H \leq l$, we deduce  ${ ||g||_G \leq k+l }$.
  
  For the second item let us show $\forall g \in G , ||g||_G \geq \omega(g)$. Let $g=fh$ be the coset decomposition with $f\in F$ so that $\omega(g) = \omega(f) = ||f||_G$. Moreover, our choice of $F$ ensures that $ \forall g \in fH, ||g||_G \geq ||f||_G$. Therefore $\omega (g) > k $ implies $g \notin B_G(1,k)$.
  We deduce that $\forall g \in B_G(1,k), \omega(g) \leq  k$  and ${ ||g||_H \leq \Lambda(k) }$ by definition of $\Lambda (k)$.
\end{proof}

We can now relate the dynamics of $\Phi_G$ to that of $\Phi_H$: $\Phi_G$ is just the parallel execution of $\Phi_H$ on each coset.

\begin{lemma}[Lifted dynamics]\ 
\label{lem:parallel}
  \begin{enumerate}
  \item $\forall f_1,f_2 \in F , \forall x_1,x_2 \in A^G$ : 
    $$ (f_1^{-1}x_1)_{|H} = (f_2^{-1}x_2)_{|H} \Rightarrow \forall t \in \mathbb{N}, (f_1^{-1}\Phi_G^t(x_1))_{|H} = (f_2^{-1}\Phi_G^t(x_2))_{|H} $$
  \item $\forall x\in A^G, \forall t\in\N, \Phi_G^t(x)_{|H} = \Phi_H^t(x_{|H})$
  \item $ \forall x\in A^G, \forall f\in F,\forall t\in\N,
    (f^{-1}\Phi^t_G(x))_{|H} = \Phi^t_H((f^{-1}x)_{|H})$
  \end{enumerate}
\end{lemma}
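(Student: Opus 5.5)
The plan is to reduce all three items to a single one-step identity. Then item 1 follows by induction on $t$, and items 2 and 3 are obtained by specializing item 1 and comparing with $\Phi_H$.

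\textbf{One-step identity.} Fix $g\in G$ and $x\in A^G$. For every $h\in H$, the definition of $\Phi_G$ and of the shift gives
\[(g^{-1}\Phi_G(x))(h)=\Phi_G(x)(gh)=\mu\big(((gh)^{-1}x)_{|S}\big),\]
and $((gh)^{-1}x)(s)=x(ghs)=(g^{-1}x)(hs)$. Since $S\subseteq H$ and $h\in H$, we have $hs\in H$. Hence the value $(g^{-1}\Phi_G(x))(h)$ depends only on $(g^{-1}x)_{|H}$. More precisely, the same computation performed inside $H$ shows
\[(g^{-1}\Phi_G(x))_{|H}=\Phi_H\big((g^{-1}x)_{|H}\big),\]
because $\Phi_H(y)(h)=\mu((h^{-1}y)_{|S})$ and $(h^{-1}y)(s)=y(hs)$. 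This identity, with $t=1$ and $g=f\in F$, is item 3.

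\textbf{Iterating.} Apply the one-step identity to $\Phi_G^{t-1}(x)$ in place of $x$ and induct on $t$. This gives
\[(f^{-1}\Phi_G^t(x))_{|H}=\Phi_H\big((f^{-1}\Phi_G^{t-1}(x))_{|H}\big)=\Phi_H^t\big((f^{-1}x)_{|H}\big),\]
which is item 3 in full. Item 2 is the special case $f=1_G$; recall that $1_G\in F$ by the choice of representatives. For item 1, apply item 3 to both $(f_1,x_1)$ and $(f_2,x_2)$. The right-hand sides $\Phi_H^t((f_i^{-1}x_i)_{|H})$ coincide because their arguments are equal by hypothesis. Note that item 1 only needs the $H$-restriction to be preserved, so in fact any $f_1,f_2\in G$ would do.

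\textbf{Main obstacle.} The only delicate point is bookkeeping with the left action convention $gx=x\circ L_{g^{-1}}$. One must check that $(g^{-1}x)(h)=x(gh)$ and that the neighborhood lookups $hs$ stay inside $H$. This is exactly where $S\subseteq H$ is used, and it explains why the cosets must be left cosets $fH$.
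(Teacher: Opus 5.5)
Your proof is correct, and it rests on the same observation as the paper's: for $h\in H$ and $s\in S$, the lookup $hs$ stays in $H$ because $S\subseteq H$. What differs is the logical order.

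The paper proves item 1 and item 2 by two separate inductions. These repeat essentially the same local computation, once comparing two translated configurations under $\Phi_G$ and once comparing $\Phi_G$ with $\Phi_H$. The paper then says item 3 ``follows directly''. Implicitly this means applying item 1 with $f_2=1_G$ and $x_2=f^{-1}x$ (or using that $\Phi_G$ commutes with shifts), and then item 2.

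You instead isolate the one-step identity $(g^{-1}\Phi_G(x))_{|H}=\Phi_H((g^{-1}x)_{|H})$ for an arbitrary $g\in G$. You iterate it once to get item 3, and read off items 2 and 1 as immediate corollaries. This saves one induction and makes the paper's ``follows directly'' step explicit.

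Your route also yields slightly more. Item 3 holds for every translate $g\in G$, and item 1 holds for arbitrary $f_1,f_2\in G$. So the particular choice of representatives $F$, and even the fact that $1_G\in F$, plays no role in this lemma.
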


\begin{proof}
  We show first item by induction on $t$: the conclusion of the implication holds for $t=0$, and if $(f_1^{-1}\Phi_G^t(x_1))_{|H} = (f_2^{-1}\Phi_G^t(x_2))_{|H}$ for some $t$, then for any $h\in H$, $\forall i \in \{ 1,2 \}$: 
    \[f_i^{-1}\Phi_G^{t+1}(x_i)(h)= \Phi ( f_i^{-1}\Phi_G^t(x_i) )(h) = \mu( (h^{-1}f_i^{-1}\Phi_G^t(x_i))_{|S}  ) \]
    For $s\in S \subset H$, $$ h^{-1}f_1^{-1}\Phi_G^t(x_1) (s) = f_1^{-1}\Phi_G^t(x_1)(hs) = f_2^{-1}\Phi_G^t(x_2)(hs) = h^{-1}f_2^{-1}\Phi_G^t(x_2) (s) $$ 
    because $hs \in H$. By definition of $\Phi_G$ from $\mu$ we deduce $(f_1^{-1}\Phi_G^{t+1}(x_1))_{|H} = (f_2^{-1}\Phi_G^{t+1}(x_2))_{|H}$.

  The second item is shown by induction again. Suppose that for some $t\in\N$ $x\in A^G$ it holds $\Phi_G^t(x)_{|H} = \Phi_H^t(x_{|H})$ and let $h\in H$. Since $S\subset H$, $\forall y \in A^G, (y\circ L_h)_{|S} = (y_{|H} \circ L_h )_{|S}  $ because $\forall s \in S ,hs \in H$.  We therefore have: 
  \begin{equation}
    \begin{split}
      \Phi^{t+1}_G(x)(h) 
      &= \mu((h^{-1}\Phi^t_G(x))_{|S})  
      \\&= \mu((\Phi^t_G(x) \circ L_h  )_{|S}) 
      \\&= \mu((\Phi^t_G(x)_{|H} \circ L_h  )_{|S})  
      \\&= \mu((\Phi^t_H(x_{|H}) \circ L_h  )_{|S})  
      \\&= \mu((h^{-1}\Phi^t_H(x_{|H})   )_{|S}) 
      \\&= \Phi^{t+1}_H(x_{|H})(h) 
    \end{split}    
  \end{equation}

  The third item follows directly from the two first ones.
\end{proof}

One can prove that the projection onto $H$ of equicontinuity points of $\Phi_G$ gives equicontinuity points for $\Phi_H$.

\begin{proposition}
  If $\Phi_G$ possesses an equicontinuity point then so does $\Phi_H$.
\end{proposition}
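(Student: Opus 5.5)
The plan is to show that if $x\in A^G$ is an equicontinuity point of $\Phi_G$, then its restriction $x_{|H}\in A^H$ is an equicontinuity point of $\Phi_H$. The two tools are item 2 of Lemma~\ref{lem:parallel} ($\Phi_G^t(y)_{|H}=\Phi_H^t(y_{|H})$) and a comparison between balls of $d_H$ and $d_G$ restricted to $H$.

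\emph{Step 1: choosing $\delta$.} Fix $\epsilon>0$ and take $k\in\N$ with $2^{-k}\le\epsilon$.
Equicontinuity of $x$ for $\Phi_G$, applied with precision $2^{-k'}$ where $k'=k$, gives some $\delta_G=2^{-p}$. This means: for every $y\in A^G$ agreeing with $x$ on $B_G(1_G,p)$, the configurations $\Phi_G^t(y)$ and $\Phi_G^t(x)$ agree on $B_G(1_G,k)$ for all $t$. (The paper's metric convention is off by one; we absorb it by enlarging $p$ and $k$ by one.) Since $B_G(1_G,p)$ is finite and every element of $H$ lies at finite $d_H$-distance from $1$, the set $B_G(1_G,p)\cap H$ is contained in $B_H(1_H,q)$ for some $q$. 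We then take $\delta_H=2^{-q}$.

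\emph{Step 2: extending a perturbation.} Let $z\in A^H$ agree with $x_{|H}$ on $B_H(1_H,q)$. Define $y\in A^G$ by $y_{|H}=z$ and $y(g)=x(g)$ for $g\notin H$. Then $y$ agrees with $x$ on $B_G(1_G,p)$: off $H$ this holds by construction, and on $H\cap B_G(1_G,p)\subseteq B_H(1_H,q)$ it holds by the choice of $z$. Hence $\Phi_G^t(y)_{|B_G(1_G,k)}=\Phi_G^t(x)_{|B_G(1_G,k)}$ for all $t$.

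\emph{Step 3: projecting back to $H$.} By item 2 of Lemma~\ref{lem:parallel}, $\Phi_H^t(z)$ and $\Phi_H^t(x_{|H})$ agree on $B_G(1_G,k)\cap H$. The remark $d_G(h_1,h_2)\le d_H(h_1,h_2)$ gives $B_H(1_H,k)\subseteq B_G(1_G,k)\cap H$. So the two orbits agree on $B_H(1_H,k)$ for every $t$, i.e.\ they stay $\epsilon$-close in $d^H$. This shows $x_{|H}$ is an equicontinuity point of $\Phi_H$.

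The only delicate point is the direction of the inclusions between $H$-balls and $G$-balls. In Step 1 the needed inclusion $B_G(1_G,p)\cap H\subseteq B_H(1_H,q)$ holds merely by finiteness, not by any distortion estimate. In Step 3 the needed inclusion $B_H(1_H,k)\subseteq B_G(1_G,k)$ comes from the choice of generators $E_H\subseteq D$. Beyond that, the argument only requires care with the off-by-one conventions in the Cantor metric.
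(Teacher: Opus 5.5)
Your proof is correct and matches the paper's argument step for step. Like the paper, you restrict $x$ to $H$, extend an $H$-perturbation by $x$ off $H$, and choose the $H$-radius using finiteness of $B_G(1,p)\cap H$. You then project back via item 2 of Lemma~\ref{lem:parallel} together with $B_H(1,k)\subseteq B_G(1,k)$. Your explicit treatment of the off-by-one in the Cantor metric convention is a small piece of extra care that the paper leaves implicit.
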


\begin{proof}
Let $x$ be an equicontinuity point for $\Phi_G$ and define $x' = x_{|H}$. Consider $\epsilon > 0$. Note that $\forall v,w \in A^G, (d^G(v,w) \leq \epsilon) \Rightarrow (d^H(v_{|H},w_{|H}) \leq \epsilon) $  because for any $k\in\N$,  $B_H(1,k)\subset B_G(1,k)$. By hypothesis on $x$ there is $\delta_G >0$ such that $\forall y \in A^G, d^G(x,y) \leq \delta_G \Rightarrow (\forall t,\Phi_G^t(y) \in B^G(\Phi_G^t(x),\epsilon) ) $.
Let $p_G = \lceil -log_2(\delta_G) \rceil$, and let $p_H$ be such that $B_G(1,p_G) \cap H \subset B_H(1,p_H)$. We choose $\delta_H = 2^{-p_H}$.
For any $y\in B^H(x',\delta_H)$, consider $y' \in A^G$ such that $y'_{|H}= y$ et $y'_{|G\setminus H }=x_{|G\setminus H }$. We have $d^G(y',x) \leq \delta_G$ because on one hand $B_G(1,p_G) \cap H \subset B_H(1,p_H)$, and on the other hand $Y$ and $x$ coincide outside $H$. We therefore have:
\[\forall t \in \N , d^G( \Phi_G^t(y'),\Phi_G^t(x) ) \leq \epsilon \textrm{ which gives } d^H( \Phi_G^t(y')_{|H},\Phi_G^t(x)_{|H} ) \leq \epsilon\]
By Lemma~\ref{lem:parallel} we conclude
\[\forall t \in \mathbb{N} , d^H(\Phi_H^t(y),\Phi_H^t(x')) \leq \epsilon.\]
\end{proof}

With appropriate choices of rectangles when translating distances into regions, one can also prove that the sensitivity of $\Phi_G$ transfers to $\Phi_H$.

\begin{proposition}
  If $\Phi_G$ is sensitive to initial conditions then so is $\Phi_H$.
\end{proposition}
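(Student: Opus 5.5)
We prove the contrapositive: if $\Phi_H$ is not sensitive, then neither is $\Phi_G$. The tool is Proposition~\ref{prop:nonsensitiveblocking}, applied in both directions. Non-sensitivity of $\Phi_H$ gives, for every $l\in\N$, a $B_H(1,l)$-blocking word for $\Phi_H$. We must build, for every $k$, a $B_G(1,k)$-blocking word for $\Phi_G$.

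The plan is to copy a blocking word of $\Phi_H$ onto each coset that meets the ball. Fix $k$. By the second item of the proposition on rectangles, $B_G(1,k)\subseteq R(k,\Lambda(k))$, so it suffices to block $R(k,\Lambda(k))$. Write $F_k=\{f\in F: \|f\|_G\le k\}$; this set is finite, and
\[R(k,\Lambda(k)) = \bigcup_{f\in F_k} f\,\bigl(B_H(1,\Lambda(k))\bigr).\]
Take a $B_H(1,\Lambda(k))$-blocking word $u$ for $\Phi_H$ with finite support $L\subseteq H$. Define a pattern $w$ on the finite support $L'=\bigcup_{f\in F_k} fL$ by $w(fh)=u(h)$ for $f\in F_k$ and $h\in L$. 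This is well defined because the decomposition $g=fh$ is unique.

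Next we check that $w$ is $R(k,\Lambda(k))$-blocking for $\Phi_G$. Let $x,y\in A^G$ with $x_{|L'}=y_{|L'}=w$, and fix $f\in F_k$. Then $(f^{-1}x)_{|L}=(f^{-1}y)_{|L}=u$. By the third item of Lemma~\ref{lem:parallel},
\[(f^{-1}\Phi_G^t(x))_{|H}=\Phi_H^t\bigl((f^{-1}x)_{|H}\bigr),\]
and the same holds for $y$. Since $u$ is $B_H(1,\Lambda(k))$-blocking for $\Phi_H$, the two right-hand sides agree on $B_H(1,\Lambda(k))$ for every $t$. Translating back by $f$, $\Phi_G^t(x)$ and $\Phi_G^t(y)$ agree on $fB_H(1,\Lambda(k))$ for every $t$. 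Taking the union over $f\in F_k$, they agree on $R(k,\Lambda(k))\supseteq B_G(1,k)$. Finally, restricting the block region (a remark made after the definition of blocking words) gives a $B_G(1,k)$-blocking word. Since $k$ was arbitrary, Proposition~\ref{prop:nonsensitiveblocking} shows that $\Phi_G$ is not sensitive.

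The main obstacle is the bookkeeping between the two descriptions of $R(k,\Lambda(k))$. The rectangle is defined through $\omega(g)\le k$ and $\|\pi(g)\|_H\le l$, and we need this to coincide with the union of translates $f\,B_H(1,l)$ over $f\in F_k$. This holds because $\pi(fh)=h$ and $\omega(fh)=\|f\|_G$ for $f\in F$. One also needs $\Lambda(k)$ to be finite, which is true because $B_G(1,k)$ is finite. Beyond that, the argument only uses finiteness of $F_k$, which holds since $\|f\|_G\le k$ allows only finitely many $f$.
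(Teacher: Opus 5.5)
Your proof is correct and is essentially the paper's argument. Like the paper, you replicate the $H$-witness on every coset $fH$ with $f\in F\cap B_G(1,k)$, use Lemma~\ref{lem:parallel} to reduce the dynamics on each such coset to $\Phi_H$, and conclude via $B_G(1,k)\subseteq R(k,\Lambda(k))=\bigcup_{f\in F\cap B_G(1,k)} fB_H(1,\Lambda(k))$. The only difference is packaging: you go through blocking words via Proposition~\ref{prop:nonsensitiveblocking} and copy onto finitely many cosets, whereas the paper argues by contradiction in $\epsilon$--$\delta$ form with the configuration $x\circ\pi$, which also needs the inclusion $R(k_G,p_H)\subseteq B_G(1,k_G+p_H)$ to choose $\delta_G$.
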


\begin{proof}
  By contradiction, we suppose $\Phi_G$ is sensitive with sensitivity constant $\epsilon_G>0$  but that $\Phi_H$ is not sensitive.
  Let $k_G = \lceil - log_2(\epsilon_G) \rceil $ and $k_H = \Lambda(k_G)$.
  Observe that $B_G(1,k_G) \subset R(k_G,k_H)= R $ and let $\epsilon_H = 2^{-k_H}$. 
  Since $\Phi_H$ is not sensitive:
\[\exists x\in A^H, \exists \delta_H > 0, \forall y \in B^H(x,\delta_H),\forall t, d^H(\Phi_H^t(y) , \Phi_H^t(x)) \leq \epsilon_H\]
We now define $x'= x \circ \pi$ and seek a contradiction with $\epsilon_G$-sensitivity of $\Phi_G$ at $x'$.
 First, let $p_H = \lceil - log_2(\delta_H) \rceil$.
 Note that $R(k_G,p_H) \subset B_G(1,k_G+p_H) $ and define $p_G = k_G + p_H$ and $\delta_G = 2^{-p_G} $.
 Pick any $y' \in B^G(x',\delta_G)$.
 By definition, $y'_{|B_G(1,p_G)} = x'_{|B_G(1,p_G)} $ therefore $y'_{|R(k_G,p_H)} = x'_{|R(k_G,p_H)} $.
 We deduce:
 \[\forall f \in F \cap B_G(1,k_G), (f^{-1}y')_{|H} \in B^H(x,\delta_H)\]
By the non-sensitivity assumption on $H$ we get:
$$\forall f \in F \cap B_G(1,k_G), \forall t \in \N,
(f^{-1}\Phi_G^t(y'))_{|H} = \Phi_H^t((f^{-1}y')_{|H}) \in B^H(\Phi_H^t(x),\epsilon_H).$$
Therefore $\forall f \in F \cap B_G(1,k_G), \forall t \in \mathbb{N}$, it holds
\begin{equation}
    \begin{split}
(f^{-1}\Phi_G^t(y'))_{|B_H(1,k_H)} 
  & = \Phi_H^t(x)_{|B_H(1,k_H)} \\
  &= \bigl(\Phi_H^t((f^{-1}x')_{|H})\bigr)_{|B_H(1,k_H)} \\
  &= \bigl((\Phi_G^t(f^{-1}x'))_{|H}\bigr)_{|B_H(1,k_H)} \textrm{ (by Lemma~\ref{lem:parallel})} \\
  &= \bigl(f^{-1}\Phi_G^t(x')\bigr)_{|B_H(1,k_H)}.
    \end{split}
\end{equation}

The last equality comes from the fact that $\forall f \in F, (f^{-1}x')_{|H} = x'_{|H}=x$, because $x'(fh) = x(\pi(fh)) = x(h)$ for $h\in H$ and $ f \in F$.
The chain of equality above can be rewritten:
$$\Phi_G^t(y')_{|fB_H(1,k_H)} =  \Phi_G^t(x')_{|fB_H(1,k_H)}$$
By definition of rectangles, we have
$\forall h,l \in \mathbb{N}^2, R(h,l) = \cup_{f \in F \cap B_G(1,h)} (fB_H(1,l))   $
so our previous equality true for any ${f\in F\cap B_G(1,k_G)}$ actually gives
$$\forall t\in \mathbb{N}, \Phi^t(y')_{|R(k_G,k_H)} =  \Phi^t(x')_{|R(k_G,k_H)}.$$
Since $B_G(1,k_G) \subset R(k_G,k_H) $,  we finally have
$$\forall t\in \mathbb{N}, \Phi^t(y') \in  B^G(\Phi_G^t(x'),\epsilon_G). $$
Since $y'$ was arbitrarily picked inside $B^G(x',\delta_G)$, this contradicts $\epsilon_G$-sensitivity of $\Phi_G$.
\end{proof}

\begin{corollary}\label{coro:subgroup}
  If Kurka's dichotomy holds on a group, then it must also hold on any of its subgroup.
\end{corollary}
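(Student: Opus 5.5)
We argue by contraposition, combining the two preceding propositions through the lifting $\Phi_H\mapsto\Phi_G$. Suppose Kurka's dichotomy fails on a subgroup $H$ of $G$. Then there is a CA $\Phi_H$ on $H$ with alphabet $A$, neighborhood $S\subseteq H$ and local map $\mu$ that is not sensitive to initial conditions but has no equicontinuity point. Since $S\subseteq G$, the same pair $(S,\mu)$ defines a CA $\Phi_G$ on $A^G$, as described at the beginning of this section. We will show that $\Phi_G$ is also a counter-example, so the dichotomy fails on $G$.

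First, $\Phi_G$ has no equicontinuity point. By the proposition showing that equicontinuity points of $\Phi_G$ project to equicontinuity points of $\Phi_H$ (restriction to $H$), any equicontinuity point of $\Phi_G$ would give one for $\Phi_H$. This contradicts our choice of $\Phi_H$.

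Second, $\Phi_G$ is not sensitive. By the proposition stating that sensitivity of $\Phi_G$ transfers to $\Phi_H$, its contrapositive says that non-sensitivity of $\Phi_H$ implies non-sensitivity of $\Phi_G$. Our $\Phi_H$ is non-sensitive, so $\Phi_G$ is non-sensitive.

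Hence $\Phi_G$ is non-sensitive without equicontinuity points, and the dichotomy fails on $G$. This is the contrapositive of the corollary.

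The only point needing care is that both propositions are stated for the specific generating sets $E_H\subseteq D$ chosen in this section. By the earlier proposition, sensitivity and equicontinuity points do not depend on the generating set, so the conclusions hold for any choice. We also implicitly assume $H$ is finitely generated, as the setting of the paper requires. Given the earlier results, there is no real obstacle: the work was done in the two propositions, and the corollary is bookkeeping.
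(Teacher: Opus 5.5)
Your proof is correct and follows the paper's argument: by contraposition, the induced CA $\Phi_G$ inherits both non-sensitivity and the absence of equicontinuity points from $\Phi_H$ via the two preceding propositions, which is exactly the reasoning announced at the start of that section. Your side remarks are also right. The earlier proposition makes the conclusions independent of the choice of generators, and the finite generation of $H$ is an implicit hypothesis the paper also needs, since subgroups of finitely generated groups need not be finitely generated.
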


\section{Free group with 2 or more generators}

\newcommand\interior{\iota}
\newcommand\boundary{\beta}
\newcommand\NZ{\text{D}}
In this section we show that Kurka's dichotomy does not hold on the free group with $2$ (or more) generators.
Let $G$ be the free group with generators ${E=\{a,a^{-1},b,b^{-1}\}}$ and $A=\{0,1,\interior,\boundary\}$. Intuitively, the CA we are going to define on state set $A$ behaves like the addition modulo $2$ on $\{0,1\}$ and, besides, it allows the existence of pervasive ``obstacles'' which are zones whose \emph{interior} is filled with $\interior$ and whose \emph{boundary} is in state $\boundary$.
The rest of the behavior consists in erasing states ${\{\interior,\boundary\}}$ which are not part of a well-formed obstacle.
To precisely define this behavior, we need the following definitions.

In a configuration ${c\in A^G}$, a position ${g\in G}$ is said \emph{free} if the two following conditions hold:
\begin{itemize}
\item ${c_g\in\{0,1\}}$ and
\item ${|\{g'\in g\cdot E : c_{g'}\in\{0,1\}\}|\geq 2}$
\end{itemize}
A position $g$ is said \emph{blocked} in $c$ if one of the following conditions hold:
\begin{itemize}
\item ${c_g=\interior}$ and ${\forall g'\in g\cdot E}$, ${c_{g'}\in\{\interior,\boundary\}}$, or
\item ${c_g=\boundary}$ and exactly one element ${g'\in g\cdot E}$ is such that ${c_{g'}= \interior}$ and all other elements of ${g\cdot E}$ are free in $c$.
\end{itemize}

Now define the CA $F$ with alphabet $A$ as follows:
\[ F(c)_g =
\begin{cases}
  \displaystyle c_g+\sum_{g'\in g\cdot E:c_{g'}\in\{0,1\}}c_{g'} \bmod 2 &\text{ if }c_g\in\{0,1\},\\
  c_g&\text{ if $g$ is blocked in $c$},\\
    0&\text{ else.}
\end{cases}
\]

From the definition of 'free' and 'blocked' above, it can be checked that $F$ has radius $2$. For any configuration $c$ define ${\NZ(c) = \{g\in G : c_g\not\in\{0,1\}\}}$.

Some facts that follow directly from the above definitions:
\begin{itemize}
\item $\NZ(c)$ is decreasing for inclusion under the action of $F$, and if all positions ${g\in \NZ(c)}$ are blocked in $c$, then ${\NZ(c)=\NZ(F(c))}$.
\item A free position stays free forever.
\item If ${c_g\in\{0,1\}}$ and $g$ is not free in $c$, then all positions ${g'\in g\cdot E}$ such that ${c_{g'}\in\{\interior,\boundary\}}$ change their state in one step and $g$ becomes free in $F(c)$.
\item In a configuration where all positions ${g\in \NZ(c)}$ are blocked, all ${g\not\in \NZ(c)}$ are free and $F$ is acting like an Abelian CA on them: formally, if
  \[X_c = \{c' : \NZ(c)=\NZ(c') \text{ and } \forall g\in \NZ(c), c'_g=c_g\}\]
  then 
  \[\forall c^1,c^2\in X_c,\forall g\not\in \NZ(c) : F(x^1\oplus x^2)_g=F(x^1)_g+F(x^2)_g\bmod 2\]
\end{itemize}
where ${(x^1\oplus x^2)(g) = x^1_g + x^2_g \bmod 2.}$

We are now ready to analyze the dynamical properties of $F$.

\begin{lemma}
  $F$ is not sensitive to initial conditions.
\end{lemma}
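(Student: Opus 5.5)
By Proposition~\ref{prop:nonsensitiveblocking}, it suffices to exhibit, for every $k\in\N$, a $B_G(1_G,k)$-blocking word for $F$. The natural candidate is a large well-formed obstacle. Fix $k$ and let $I=B_G(1_G,k+1)$, the interior. Let $\partial I=\{g\notin I : g\cdot E\cap I\neq\emptyset\}$, the boundary. Since $\cayley(G,E)$ is a tree, each $g\in\partial I$ has exactly one neighbour in $I$, namely its parent towards $1_G$, and its other three neighbours lie outside $I\cup\partial I$. Take $L=B_G(1_G,k+4)$ and define the pattern $u\in A^L$ by
\[u(g)=\interior \text{ on } I,\qquad u(g)=\boundary \text{ on } \partial I,\qquad u(g)=0 \text{ on } L\setminus(I\cup\partial I).\]
The claim is that $u$ is $B_G(1_G,k)$-blocking, and in fact that every $x$ with $x_{|L}=u$ satisfies $F^t(x)_{|I\cup\partial I}=u_{|I\cup\partial I}$ for all $t$.

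The key step is to show that $I\cup\partial I$ is invariant, i.e.\ that every position of $I\cup \partial I$ stays blocked forever, whatever the configuration outside $L$. For $g\in I$, all its neighbours lie in $I\cup\partial I$, so $g$ is blocked as long as those cells keep their states. For $g\in\partial I$, $g$ has exactly one $\interior$ neighbour, and its three other neighbours $g'$ (at distance $k+3$ from $1_G$) must be \emph{free}. Such a $g'$ has state $0$. Its neighbours are $g$, which is in state $\boundary$, and three children at distance $k+4$, which lie in $L$ and have state $0$. So $g'$ has at least $2$ neighbours in $\{0,1\}$, hence is free at time $0$. By the fact stated above that a free position stays free forever, it remains free at all later times, whatever happens outside $L$. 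An induction on $t$ then finishes the argument. If at time $t$ the cells of $I\cup\partial I$ carry the states of $u$ and the three outer neighbours of each boundary cell are free, then every cell of $I\cup\partial I$ is blocked and keeps its state at time $t+1$, while freeness persists.

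This gives $F^t(x)_{|I}=F^t(y)_{|I}$ for all $t$ and all $x,y$ agreeing with $u$ on $L$, and $I\supseteq B_G(1_G,k)$. So $u$ is $B_G(1_G,k)$-blocking, and non-sensitivity follows by Proposition~\ref{prop:nonsensitiveblocking}. The main point of care is the persistence of freeness of the cells just outside the boundary. This depends on the cited fact that a free cell stays free, which should be rechecked carefully: a free cell with state in $\{0,1\}$ stays in $\{0,1\}$, and a neighbour in $\{0,1\}$ can never turn into $\interior$ or $\boundary$, since $F$ only ever outputs $0$, $1$ or the current state. So the number of neighbours in $\{0,1\}$ can only increase, and the cell stays free. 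The tree structure of the free group is what makes the boundary condition ``exactly one $\interior$ neighbour'' hold.
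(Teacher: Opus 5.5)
Your proof is correct and uses the same idea as the paper: a large well-formed obstacle (a ball of $\interior$ surrounded by a sphere of $\boundary$) whose outer neighbours are free stays fixed forever, so perturbations outside a finite region cannot reach the centre. The only differences are that you package this as a $B_G(1_G,k)$-blocking word and invoke Proposition~\ref{prop:nonsensitiveblocking}, whereas the paper argues directly from the definition of sensitivity, and that your two layers of $0$ beyond the boundary in $L$, together with your check that free cells stay free, make explicit the margin that keeps the boundary blocked, which the paper's ``$2^{-n}$-close'' leaves implicit.
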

\begin{proof}
  For any $n$, one can define a configuration $c$ where all positions ${\|g\|< n}$ are blocked and all other positions are free: 
  \[c_g =
  \begin{cases}
    \interior&\text{ if }\|g\|\leq n-2,\\
    \boundary&\text{ if }g=n-1,\\
    0&\text{ else.}
  \end{cases}
\] If $c'$ is $2^{-n}$-close to $c$ then ${F^t(c')}$ will stay $2^{-n}$-close to ${F^t(c)}$ for any $t$, showing that ${2^{-n}}$ cannot possibly be a sensitivity constant for $F$. This works for all $n$ so $F$ is not sensitive.
\end{proof}

\begin{lemma}
  $F$ has no equicontinuity point.
\end{lemma}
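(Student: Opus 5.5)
We argue by contradiction, but it is easier to prove directly that every configuration fails to be an equicontinuity point. Fix $c\in A^G$. We take $\epsilon=2^{-1}$, so that it suffices, for every $n$, to produce two configurations $y^1,y^2$ agreeing with $c$ on $B_G(1_G,n)$ and a time $t$ with $F^t(y^1)_{1_G}\neq F^t(y^2)_{1_G}$. Indeed, then one of them is $\epsilon$-far from $F^t(c)$ at time $t$. We will use the tree structure of $\cayley(G,E)$ in two ways. First, in a region where every cell is free, $F$ acts as the linear rule $c_g+\sum c_{g'}\bmod 2$. Second, between two vertices of the tree there is a unique geodesic, so a single flipped bit at distance $d$ influences a cell at time exactly $d$ with coefficient equal to the number of geodesics, namely $1$.

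We split on the behaviour of the origin in the orbit of $c$. Recall that $\NZ(F^t(c))$ is decreasing and that a free cell stays free.

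\emph{Case (a): $1_G\notin\NZ(F^{T_0}(c))$ for some $T_0$.} By the facts listed before the lemma, the origin is free at time $T_0+1$ and stays free. A free cell has at least two free neighbours, so starting from $1_G$ we can walk along free cells without backtracking. This produces an infinite geodesic ray $\rho$ of cells that are free from some time on, though possibly at different times. Set $y^1=c$, and let $y^2$ be $c$ with one bit in $\{0,1\}$ flipped at a cell $g$ on $\rho$ at very large distance $m$.

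The plan is to show, by induction on time, that the difference $F^t(y^1)\oplus F^t(y^2)$ is supported on a ball of radius $t$ around $g$. We also want to show that, as long as the cells involved are free in both orbits, this difference evolves by the linear rule. Then the difference reaches $1_G$ at time exactly $m$ with value $1$, by uniqueness of the geodesic in the tree. The delicate point is to choose $m$ large with respect to the finitely many times at which the cells of $\rho$ near $1_G$ become free, so that the front of the difference travels only through cells that are already free and identical obstacle data in both orbits. We also need that the difference never creates or destroys a $\{\interior,\boundary\}$ cell, since a flipped $0/1$ bit cannot alter the free/blocked status.

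\emph{Case (b): $1_G\in\NZ(F^t(c))$ for all $t$.} Then the origin keeps a constant state $\interior$ or $\boundary$ from some time on, and it must be blocked. Its obstacle component $K$ (the $\interior$ cells connected to it, plus their $\boundary$ leaves) is stable. If $K$ reaches outside $B_G(1_G,n+2)$, choose $y^2$ equal to $c$ except that some far $\interior$ cell of $K$ on the geodesic from $1_G$ is replaced by $0$. In $y^2$ that $0$ is not free, so its $\interior$/$\boundary$ neighbours turn to $0$. Erosion then propagates one step per time along the $\interior$-path towards $1_G$, each newly created $0$ again having neighbours in $\{\interior,\boundary\}$. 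Eventually the origin takes a value in $\{0,1\}$ in $F^t(y^2)$, whereas in $F^t(c)$ it stays in $\{\interior,\boundary\}$.

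If instead $K\subseteq B_G(1_G,n+2)$, I would modify $c$ just outside the ball. The plan is to put $\interior$ on two neighbours of a free cell adjacent to a $\boundary$ leaf of $K$; that free cell is then no longer free, so the leaf loses its blocked status and the origin's obstacle starts eroding. If no leaf of $K$ is close enough to the sphere for this, I would instead attach a new $\interior$ component to extend $K$ outward and then erode it as in the previous subcase.

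I expect Case (b) with $K$ finite and deep inside the ball to be the main obstacle. For such configurations the origin really can be protected by a small obstacle, so the argument must pick $\epsilon$ to reach a free cell near the obstacle rather than the origin itself. Concretely, I would fix $\epsilon$ so that the ball of radius $\lceil-\log_2\epsilon\rceil$ contains a cell that is eventually free in the orbit of $c$, if one exists, and run Case (a) at that cell. The genuinely hard configurations are those where every cell near the origin is eventually blocked, and there the erosion argument above must be carried out.
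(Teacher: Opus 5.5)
\textbf{There is a genuine gap in Case (a), the step that carries the proof.} Your final case structure matches the paper's: choose $\epsilon$ so that the ball contains a cell that is eventually in $\{0,1\}$, and otherwise erode. But Case (a) does not go through as stated.

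You keep $y^1=c$ and let $y^2$ be $c$ with a $0/1$ bit flipped at a far cell $g_m$ of $\rho$. Two things go wrong.
\begin{itemize}
\item Such a bit need not exist. Your construction only guarantees that $g_m$ is in $\{0,1\}$ from time roughly $T_0+m$ on.
\item Even when it exists, the timing fails. By your own autonomy observation, for the difference to arrive at the target at time exactly $m$ with coefficient $1$, the ray cell at distance $j$ from the target must lie outside $\mathrm{D}(F^{m-j-1}(c))$ for every $j<m$. This constrains the cells near the flip site, which must be in $\{0,1\}$ almost from time $0$. It does not constrain the finitely many cells near $1_G$, so taking $m$ large does not help.
\end{itemize}
For instance, take $c_{1_G}=0$ and $c_g=\iota$ for all $g\neq 1_G$. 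Then $\mathrm{D}(F^t(c))=\{g:\|g\|>t\}$. The origin is free from time $1$, yet no cell other than $1_G$ carries a $0/1$ value at time $0$. Replacing a far $\iota$ by $0$ or $1$ changes the obstacle dynamics, which invalidates your linearity argument. Since $\mathrm{D}(F^t(c))$ may be infinite and never stabilize, no flip site in $c$ itself is guaranteed to work.

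\textbf{The missing idea is the paper's truncation.}
\begin{itemize}
\item Let $d$ agree with $c$ on $B_G(1_G,n)$ and be $0$ outside. It is $2^{-n}$-close to $c$, and $F^t(d)_g=F^t(c)_g\in\{0,1\}$ once $n>\|g\|+2t$.
\item Now $\mathrm{D}(d)$ is finite, so it stabilizes at some time $T$. After $T$, every cell outside $\mathrm{D}(F^T(d))$ is free. This yields an outward path $g=g_0,\dots,g_i$ of such cells with $\|g_i\|>n$.
\item Outside $B_G(1_G,n)$ every cell is in $\{0,1\}$ and free at all times. So flipping a bit of $d$ at distance $T$ beyond $g_i$ makes the difference reach $g_i$ exactly at time $T$, and then run back along the path to $g$.
\end{itemize}
The key point is that both compared configurations differ from $c$, and the flip site is chosen after $T$ is known.

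\textbf{Your residual case needs no subcase analysis.} If no cell is ever in $\{0,1\}$, then no $\beta$ can be blocked, since that requires free neighbours. So $c$ is uniformly $\iota$, and your erosion argument for unbounded $K$ applies directly. The step ``attach a new $\iota$ component'' for bounded $K$ would require modifying cells inside the ball. It is both impossible and unnecessary.
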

\begin{proof}
  Suppose by contradiction that $c$ is an equicontinuity point. We consider $2$ cases.
  
  \textbf{First case:} for any $g\in G$ and any time $t$, position $g$ is blocked in ${F^t(c)}$: this implies that $c$ is uniformly in state $\interior$. Indeed, no position in state ${\{0,1\}}$ can be blocked, and for a position in state $\boundary$ to be block, there must be states in ${\{0,1\}}$ in the neighborhood. Now for any ${n\geq 0}$ one can consider $c^n$ which is equal to $c$ except at some position $g$ with ${\|g\|=n}$ where ${c^n_g=0}$. It is straightforward to check that ${F^n(c)_{1_G}=\interior}$ while ${F^n(c^n)_{1_G}\in\{0,1\}}$. This being for any $n$, we have a contradiction with the hypothesis that $c$ is an equicontinuous point.

  \textbf{Second case:} there is some position which is not blocked at some time in the orbit. This is the same as supposing ${F^t(c)_g\in\{0,1\}}$ for some $g$ and some $t$ (because in one step a non-blocked position turns to some state from ${\{0,1\}}$).
  Now consider any large enough $n\geq 0$, precisely ${n>\|g\|+2t}$, and let $d$ be the configuration with ${d_{g'}=c_{g'}}$ for ${\|g'\|\leq n}$ and ${d_{g'}=0}$ for ${\|g'\|> n}$. Note that by choice of $n$ and since the radius of $F$ is $2$, we also have ${F^t(d)_g\in\{0,1\}}$. 
  Moreover, the set $\NZ(d)$ is finite by construction and decreasing by application of $F$ so there is a time ${T}$ after which it stabilizes: ${\NZ(F^{T+1}(d)) = \NZ(F^T(d))}$.
  As said above, for any ${t'\geq T}$, all positions ${g'\not\in \NZ(F^T(d))}$ are free in ${F^{t'}(d)}$.
  Moreover they all have at least $2$ neighbors which are not in ${\NZ(F^T(d))}$ and therefore which are free.
  Thus we can construct a $E$-connected path ${(g_i)}$ of positions with ${\|g_i\|=\|g\|+i}$ such that ${g_0=g}$ and ${g_i}$ is free in ${F^{t'}(d)}$ as soon as ${t'\geq T}$.
  Taking $i$ large enough so that ${\|g_i\|> n}$, let's consider configuration $d'$ which is equal to $d$ except at some position ${g_d = g_i \cdot g'}$ with ${\|g_d\|=\|g_i\| + T}$ where ${d'_{g_d}=1}$ (whereas ${d_{g_d}=0}$).
  During the $T$ first steps, $d$ and $d'$ evolve in the same way in the central positions up to radius $n$ and differ only in a region filled with states in ${\{0,1\}}$ where the Abelian behavior of $F$ applies as said above.
  Therefore, at time $T$, we have the following:
  \begin{itemize}
  \item ${F^T(d)_{g'}=F^T(d')_{g'}}$ for ${\|g'\|<\|g_i\|}$, and
  \item ${F^T(d)_{g_i}\neq F^T(d')_{g_i}}$ (because $d$ and $d'$ differ only at $g_d$ and this difference reaches $g_i$ exactly after $T$ steps by construction).
  \end{itemize}
  From step $T$ on, the set of positions in state ${\{0,1\}}$ is stabilized, precisely: ${D(F^T(d))=D(F^T(d'))=D(F^{T+1}(d))=D(F^{T+1}(d'))}$. Therefore the Abelian behavior of $F$ applies on free positions and the difference between $d$ and $d'$ at $g_i$ propagate along path ${(g_j)_{0\leq j\leq i}}$. More precisely the closest difference to the center at time ${T+k}$ is at position ${g_{i-k}}$ until it reaches position $g$ at time ${T+i}$.
  To sum up, there is a fixed position $g\in G$ such that for any large enough $n$ we can construct two configurations which are ${2^{-n}}$-close to $c$ and which differ at position $g$ at some time step. This is a contradiction with the hypothesis that $c$ is an equicontinuous point.
\end{proof}

\begin{corollary}\label{coro:freegroup}
  For any $d\geq 2$, Kurka's dichotomy fails on the free group with $d$ generators.
\end{corollary}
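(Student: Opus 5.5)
The two lemmas above already settle the case of two generators: $F$ is not sensitive to initial conditions, yet it has no equicontinuity point. So $F$ is a counter-example to \KURKA{}'s dichotomy on the free group $F_2$. For $d>2$ generators, the plan is to lift this counter-example rather than redo the construction.

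The lifting works as follows. The free group $F_2$ embeds as a subgroup of $F_d$, for instance as the subgroup generated by the first two free generators. We apply Corollary~\ref{coro:subgroup} in contrapositive form. If the dichotomy held on $F_d$, it would hold on every subgroup of $F_d$, in particular on the copy of $F_2$. It fails there, so it fails on $F_d$. Equivalently, in terms of Section 4: the local rule of $F$ has neighborhood contained in $F_2$, so it induces a CA $\Phi_{F_d}$ on $F_d$. The two propositions of that section show that the lift is non-sensitive and has no equicontinuity point.

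The only step needing care is the transfer of non-sensitivity from $F_2$ up to $F_d$. The two propositions are stated in the contrapositive direction: an equicontinuity point of $\Phi_G$ projects to one of $\Phi_H$, and sensitivity of $\Phi_G$ transfers to $\Phi_H$. Taking $H=F_2$ and $G=F_d$, these give exactly what we need. $\Phi_H$ has no equicontinuity point, so neither does $\Phi_G$. $\Phi_H$ is not sensitive, so neither is $\Phi_G$.

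This is precisely the content of Corollary~\ref{coro:subgroup}, so the argument reduces to the inclusion $F_2\le F_d$ and the two lemmas on $F$. I expect no real obstacle.
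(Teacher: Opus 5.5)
Your proposal is correct and matches the paper's intended argument. The two lemmas make $F$ a non-sensitive CA without equicontinuity points on the rank-$2$ free group. Corollary~\ref{coro:subgroup} then lifts this counter-example to $F_d$ through the subgroup $F_2\le F_d$ generated by two free generators, which contains the neighborhood of $F$; this amounts to the two transfer propositions of Section 4 read contrapositively, exactly as you set them up.
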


\bibliographystyle{acm}
\bibliography{sample.bib}

\begin{thebibliography}{10}

\bibitem{Akin_1996}
{\sc Akin, E., Auslander, J., and Berg, K.}
\newblock When is a transitive map chaotic?
\newblock In {\em Convergence in Ergodic Theory and Probability}. {DE}
  {GRUYTER}, dec 1996, pp.~25--40.

\bibitem{DE_LOS_SANTOS_BA_OS_2020}
{\sc Ba{\~{n}}os, L. D. L.~S., and Garc{\'{i}}a-Ramos, F.}
\newblock Mean equicontinuity and mean sensitivity on cellular automata.
\newblock {\em Ergodic Theory and Dynamical Systems 41}, 12 (nov 2020),
  3704--3721.

\bibitem{barbieri2024}
{\sc Barbieri, S., García-Ramos, F., and Taati, S.}
\newblock Cellular automata, percolation and dynamical dichotomies, 2024.

\bibitem{celautgrp}
{\sc Ceccherini-Silberstein, T., and Coornaert, M.}
\newblock {\em Cellular Automata and Groups}.
\newblock Springer, 2010.

\bibitem{Gilman:1987:CLA}
{\sc Gilman, R.~H.}
\newblock Classes of linear automata.
\newblock {\em Ergodic Theory and Dynamical Systems 7}, 1 (1987), 105--118.

\bibitem{hedlund}
{\sc Hedlund, G.~A.}
\newblock Endomorphisms and {A}utomorphisms of the {S}hift {D}ynamical
  {S}ystems.
\newblock {\em Mathematical Systems Theory 3}, 4 (1969), 320--375.

\bibitem{kurkabook}
{\sc K\r{u}rka, P.}
\newblock {\em Topological and symbolic dynamics}.
\newblock Soci\'et\'e Math\'ematique de France, 2003.

\bibitem{LI_2014}
{\sc Li, J., Tu, S., and Ye, X.}
\newblock Mean equicontinuity and mean sensitivity.
\newblock {\em Ergodic Theory and Dynamical Systems 35}, 8 (aug 2014),
  2587--2612.

\bibitem{Meier_2008}
{\sc Meier, J.}
\newblock {\em Groups, Graphs and Trees}.
\newblock Cambridge University Press, jul 2008.

\bibitem{Sablik_2010}
{\sc Sablik, M., and Theyssier, G.}
\newblock Topological dynamics of cellular automata: Dimension matters.
\newblock {\em Theory of Computing Systems 48}, 3 (apr 2010), 693--714.

\end{thebibliography}
\end{document}